\newtheorem{thm}{Theorem}[section]
\newtheorem{cor}{Corollary}[section]
\newtheorem{lem}{Lemma}[section]
\theoremstyle{definition}
\theoremstyle{remark}
\newtheorem{rem}{Remark}[section]
\numberwithin{equation}{section}
\begin{document}

\title[Arithmetic properties of  partition quadruples  with odd parts distinct]
 {Arithmetic properties of  partition quadruples \\ with odd parts distinct}

\author{Liuquan Wang }

\address{Department of Mathematics, National University of Singapore, Singapore, 119076, SINGAPORE}

\email{wangliuquan@u.nus.edu; mathlqwang@163.com}

\subjclass[2010]{Primary 05A17; Secondary 11P83}

\keywords{Congruences; partition quadruples; distinct odd parts; theta functions.}

\dedicatory{}


\begin{abstract}
 Let $\mathrm{pod}_{-4}(n)$ denote  the number of partition quadruples of $n$ where the odd parts in each partition are distinct. We find many arithmetic properties of $\mathrm{pod}_{-4}(n)$ including the following  infinite family of congruences:  for any integers $\alpha \ge 1$ and $n \ge 0$,
 \[\mathrm{pod}_{-4}\Big({{3}^{\alpha +1}}n+\frac{5\cdot {{3}^{\alpha }}+1}{2}\Big)\equiv 0 \pmod{9}.\]
We also establish some internal congruences and some congruences modulo 2, 5 and 8 satisfied by  $\mathrm{pod}_{-4}(n)$.
\end{abstract}

\maketitle

\section{Introduction}

  Let $\mathrm{pod}_{-k}(n)$ denote the number of partition $k$-tuples of $n$ where in each partition the odd parts are distinct. For $k=1$, $\mathrm{pod}_{-1}(n)$ is often denoted as $\mathrm{pod}(n)$.
It is well-known that
  \[\psi (q)=\sum\limits_{n=0}^{\infty }{{{q}^{n(n+1)/2}}}=\frac{({{q}^{2}};{{q}^{2}})_{\infty }^{2}}{{{(q;q)}_{\infty }}}\]
  is one of Ramanujan's theta functions. Here ${{(a;q)}_{\infty }}=\prod\limits_{n=1}^{\infty }{(1-a{{q}^{n}})}$ is  standard $q$ series notation. Moreover, we introduce the notation
\[{{({{a}_{1}},{{a}_{2}},\cdots ,{{a}_{n}};q)}_{\infty }}={{({{a}_{1}};q)}_{\infty }}{{({{a}_{2}};q)}_{\infty }}\cdots {{({{a}_{n}};q)}_{\infty }}.\]
It is not difficult to find that the generating function of $\mathrm{pod}_{-k}(n)$ is
\begin{equation}\label{podk}
\sum\limits_{n=0}^{\infty }{\mathrm{pod}_{-k}(n){{q}^{n}}}=\frac{(-q;{{q}^{2}})_{\infty }^{k}}{({{q}^{2}};{{q}^{2}})_{\infty }^{k}}=\frac{1}{\psi {{(-q)}^{k}}}.
\end{equation}

In recent years, the arithmetic properties of $\mathrm{pod}_{-k}(n)$ have drawn much attention.
In 2010, Hirschhorn and Sellers \cite{hisc} studied the congruence properties of $\mathrm{pod}(n)$. They proved some infinite family of Ramanujan-type congruences including the following one:  for integers $\alpha \ge 0$ and $n\ge 0$,
\[\mathrm{pod}\Big({{3}^{2\alpha +3}}n+\frac{23\times {{3}^{2\alpha +2}}+1}{8}\Big)\equiv 0 \pmod{3}.\]
They also found some internal congruences such as
\[ \mathrm{pod}(81n+17)\equiv 5 \, \mathrm{pod}(9n+2) \pmod{27}. \]

In 2011, Radu and Sellers \cite{radu} obtained other deep congruences for $\mathrm{pod}(n)$ modulo $5$ and $7$ by using the tool of modular forms.
Chen and Xia \cite{wyc} then investigated the arithmetic properties of  $\mathrm{pod}_{-2}(n)$. They found two infinite families of  congruences modulo 3 and 5, respectively.

Recently, the author \cite{Wang} discovered many congruences modulo 7, 9 and 11 satisfied by $\mathrm{pod}_{-3}(n)$. For example, we proved that for any integers $\alpha \ge 1$ and $n\ge 0$,
\[\mathrm{pod}_{-3}\Big({{3}^{2\alpha +2}}n+\frac{23\times {{3}^{2\alpha +1}}+3}{8}\Big)\equiv 0  \pmod{9}.\]
 We continue our work by studying the congruence properties of $\mathrm{pod}_{-4}(n)$, the number of partition quadruples of $n$ where the odd parts in each partition are distinct.

The paper is organized as follows. In Section 2, we will present an infinite family of Ramanujan-type congruences: for any integers $\alpha \ge 1$ and $n\ge 0$,
\[\mathrm{pod}_{-4}\Big({{3}^{\alpha +1}}n+\frac{5\cdot {{3}^{\alpha }}+1}{2}\Big)\equiv 0 \pmod{9}.\]
In Section 3, we prove various internal congruences such as
\[\mathrm{pod}_{-4}(27n+14) \equiv -\mathrm{pod}_{-4}(9n+5) \pmod{81}.\]
In Section 4, we establish some congruences for $\mathrm{pod}_{-4}(n)$ modulo 2, 5 and 8.
For example, let $\sigma (m)$ denote the sum of all positive divisors of $m$. We establish the following arithmetic relation:
\[\mathrm{pod}_{-4}(5n+3)\equiv {{(-1)}^{n+1}}\sigma (2n+1)  \pmod {5}.\]

\section{An Infinite Family of Congruences Modulo 9}
We need some facts about $\psi (q)$ before presenting our results. The first one is the $3$-disection of $\psi (q)$
\[\psi (q)=1+q+{{q}^{3}}+{{q}^{6}}+{{q}^{10}}+{{q}^{15}}+\cdots =A({{q}^{3}})+q\psi ({{q}^{9}}).\]
where
\[A(q) =\frac{{{({{q}^{2}};{{q}^{2}})}_{\infty }}({{q}^{3}};{{q}^{3}})_{\infty }^{2}}{{{(q;q)}_{\infty }}{{({{q}^{6}};{{q}^{6}})}_{\infty }}}.\]
The proof of this result can be found in \cite{wyc,hisc}.

The following two lemmas are also of importance in our discussion.

\begin{lem}\label{basic}
Let $p$ be a prime and $\alpha $ be a positive integer. Then
\begin{displaymath}
\begin{split}
(q;q)_{\infty }^{{{p}^{\alpha }}} & \equiv ({{q}^{p}};{{q}^{p}})_{\infty }^{{{p}^{\alpha -1}}}  \pmod  {{{p}^{\alpha }}},\\
\psi (q)^{p} & \equiv \psi (q^{p}) \pmod{p}.
\end{split}
\end{displaymath}
\end{lem}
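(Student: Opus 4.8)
The plan is to base both congruences on the elementary ``freshman's dream'' in characteristic $p$, namely the following lifting statement in the ring $\mathbb{Z}[[q]]$ of integral power series: \emph{if $X\equiv Y\pmod{p^{k}}$ with $k\ge 1$, then $X^{p}\equiv Y^{p}\pmod{p^{k+1}}$.} To see this, I would write $X=Y+p^{k}Z$ with $Z\in\mathbb{Z}[[q]]$ and expand $X^{p}=(Y+p^{k}Z)^{p}$ by the binomial theorem. The term $j=0$ is $Y^{p}$; the term $j=1$ is $p\cdot p^{k}Y^{p-1}Z=p^{k+1}Y^{p-1}Z$; and for $2\le j\le p$ the term $\binom{p}{j}p^{jk}Y^{p-j}Z^{j}$ is divisible by $p^{k+1}$, since $jk\ge 2k\ge k+1$ (using $k\ge 1$) and, for $2\le j\le p-1$, $p\mid\binom{p}{j}$ supplies a further factor of $p$. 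Hence $X^{p}\equiv Y^{p}\pmod{p^{k+1}}$.

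For the first congruence I would argue by induction on $\alpha$. For the base case $\alpha=1$, the binomial theorem gives $(1-q^{n})^{p}\equiv 1-q^{pn}\pmod{p}$ for each $n\ge 1$, so multiplying over all $n$ yields
\[
(q;q)_{\infty}^{p}=\prod_{n\ge 1}(1-q^{n})^{p}\equiv\prod_{n\ge 1}(1-q^{pn})=(q^{p};q^{p})_{\infty}\pmod{p}.
\]
For the inductive step, suppose $(q;q)_{\infty}^{p^{\alpha-1}}\equiv(q^{p};q^{p})_{\infty}^{p^{\alpha-2}}\pmod{p^{\alpha-1}}$. Applying the lifting statement with $k=\alpha-1$, $X=(q;q)_{\infty}^{p^{\alpha-1}}$ and $Y=(q^{p};q^{p})_{\infty}^{p^{\alpha-2}}$, and noting $X^{p}=(q;q)_{\infty}^{p^{\alpha}}$ and $Y^{p}=(q^{p};q^{p})_{\infty}^{p^{\alpha-1}}$, we obtain the claim modulo $p^{\alpha}$.

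For the second congruence I would use the Frobenius map directly. For any $f(q)=\sum_{n}a_{n}q^{n}\in\mathbb{Z}[[q]]$, the multinomial theorem kills every off-diagonal contribution modulo $p$, so $f(q)^{p}\equiv\sum_{n}a_{n}^{p}q^{pn}\pmod{p}$; Fermat's little theorem then gives $a_{n}^{p}\equiv a_{n}$, whence $f(q)^{p}\equiv f(q^{p})\pmod{p}$. Taking $f=\psi$ yields $\psi(q)^{p}\equiv\psi(q^{p})\pmod{p}$. Alternatively, one may combine the $\alpha=1$ case of the first congruence, applied to both $(q;q)_{\infty}$ and $(q^{2};q^{2})_{\infty}$, with the product formula $\psi(q)=(q^{2};q^{2})_{\infty}^{2}/(q;q)_{\infty}$, after noting that $(q;q)_{\infty}$ is a unit in $\mathbb{Z}[[q]]$ so that the division is legitimate modulo $p$.

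The only delicate point is the $p$-adic bookkeeping in the lifting statement: one must check that \emph{all} terms with $j\ge 2$ in the binomial expansion, including the top term $p^{pk}Z^{p}$, are divisible by $p^{k+1}$. This is where the hypothesis $k\ge 1$ is essential, since it guarantees $pk\ge k+1$; the argument would fail at $k=0$, which is precisely why the base case must be handled separately rather than extracted from the lifting step.
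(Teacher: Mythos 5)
Your proof is correct, so let me compare it with what the paper actually does. The paper gives essentially no argument for the first congruence: it cites Lemma 1.2 of Radu and Sellers verbatim, and then gets the second congruence exactly as in the ``alternative'' you mention at the end, by writing $\psi(q)=(q^{2};q^{2})_{\infty}^{2}/(q;q)_{\infty}$ and applying the $\alpha=1$ case of the first congruence to numerator and denominator (the division being legitimate since $(q;q)_{\infty}$ has constant term $1$ and is thus a unit in $\mathbb{Z}[[q]]$). What you add is a self-contained proof of the cited result: the induction on $\alpha$ driven by the lifting statement ($X\equiv Y\pmod{p^{k}}$ implies $X^{p}\equiv Y^{p}\pmod{p^{k+1}}$ for $k\ge 1$) is the standard argument behind Radu--Sellers' lemma, and your $p$-adic bookkeeping is right --- in particular the observation that the top term $p^{pk}Z^{p}$ is handled by $pk\ge k+1$ only when $k\ge 1$, which is precisely why the base case $\alpha=1$ needs the separate computation $(1-q^{n})^{p}\equiv 1-q^{pn}\pmod{p}$ multiplied over all $n$ (a harmless formal-power-series step, since each coefficient involves only finitely many factors). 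Your primary route to the second congruence, via the Frobenius endomorphism of $\mathbb{F}_{p}[[q]]$ plus Fermat's little theorem, is a mild variant of the paper's product-formula route; it has the advantage of applying to an arbitrary integral power series rather than only to products of the shape at hand. In short: correct, genuinely self-contained, and strictly more detailed than the paper, which outsources the main content to a citation.
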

\begin{proof}
The first congruence relation is Lemma 1.2 in \cite{radu}. The second congruence relation follows from the first one and the product representation of $\psi(q)$.
\end{proof}
\begin{lem}\label{identity1}(Cf.\ \cite[Lemma 2.1]{hisc}.)
We have
\[A{{({{q}^{3}})}^{3}}+{{q}^{3}}\psi {{({{q}^{9}})}^{3}}=\frac{\psi {{({{q}^{3}})}^{4}}}{\psi ({{q}^{9}})}\]
and
\[
\frac{1}{\psi (q)}=\frac{\psi ({{q}^{9}})}{\psi {{({{q}^{3}})}^{4}}}\left(A{{({{q}^{3}})}^{2}}-qA({{q}^{3}})\psi ({{q}^{9}})+{{q}^{2}}\psi {{({{q}^{9}})}^{2}}\right).
\]
\end{lem}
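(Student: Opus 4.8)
The key structural observation is that the left-hand side of the first identity is a sum of cubes. Writing $u=A(q^3)$ and $v=q\psi(q^9)$, the $3$-dissection recorded above reads $\psi(q)=u+v$, while the left-hand side is exactly $u^{3}+q^{3}\psi(q^9)^{3}=u^{3}+v^{3}$. Since $u^{3}+v^{3}=(u+v)(u^{2}-uv+v^{2})$, once the first identity is established the second follows immediately: dividing the relation
\[
\frac{\psi(q^3)^4}{\psi(q^9)}=u^{3}+v^{3}=\psi(q)\bigl(u^{2}-uv+v^{2}\bigr)
\]
by $\psi(q)\cdot\psi(q^3)^4/\psi(q^9)$ and expanding $u^{2}-uv+v^{2}=A(q^3)^2-qA(q^3)\psi(q^9)+q^2\psi(q^9)^2$ yields precisely the claimed formula for $1/\psi(q)$. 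Thus the whole lemma reduces to the first identity.

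To prove the first identity I would exploit the factorization of $u^{3}+v^{3}$ over the cube roots of unity. Let $\omega=e^{2\pi i/3}$. Because $\omega^{3}=\omega^{9}=1$, replacing $q$ by $\omega q$ in the $3$-dissection gives $\psi(\omega q)=A(q^3)+\omega q\psi(q^9)=u+\omega v$, and likewise $\psi(\omega^{2}q)=u+\omega^{2}v$. Hence
\[
\psi(q)\,\psi(\omega q)\,\psi(\omega^{2}q)=(u+v)(u+\omega v)(u+\omega^{2}v)=u^{3}+v^{3}=A(q^3)^3+q^3\psi(q^9)^3,
\]
so it remains to identify this triple product with $\psi(q^3)^4/\psi(q^9)$.

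This last evaluation is the main computation, and I expect it to be the only real obstacle. Using $\psi(q)=(q^2;q^2)_\infty^2/(q;q)_\infty$ together with the elementary root-of-unity product rule
\[
\prod_{k=0}^{2}(\omega^{k}q;\omega^{k}q)_\infty=\frac{(q^3;q^3)_\infty^{4}}{(q^9;q^9)_\infty},
\]
which follows by splitting the index $n$ according to whether $3\mid n$ (the inner product $\prod_{k}(1-\omega^{kn}q^{n})$ equals $(1-q^{n})^{3}$ when $3\mid n$ and $1-q^{3n}$ otherwise), one evaluates the numerator and denominator of $\prod_{k}\psi(\omega^{k}q)$ separately. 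Since $\{\omega^{2k}\}_{k=0}^{2}$ again runs over all cube roots of unity, the numerator collapses by the same rule applied to $q^{2}$, and after combining the factors everything reduces to
\[
\psi(q)\,\psi(\omega q)\,\psi(\omega^{2}q)=\frac{(q^6;q^6)_\infty^{8}\,(q^9;q^9)_\infty}{(q^3;q^3)_\infty^{4}\,(q^{18};q^{18})_\infty^{2}}.
\]
A direct expansion of $\psi(q^3)^4/\psi(q^9)$ via the product formula gives the identical expression, which finishes the first identity and hence the lemma. One could alternatively bypass the root-of-unity bookkeeping by quoting a known $3$-dissection of $\psi(q^3)^4/\psi(q^9)$, but the cube-root-of-unity route seems cleanest and keeps both identities on the same footing.
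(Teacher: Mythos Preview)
Your argument is correct. The reduction of the second identity to the first via $u^{3}+v^{3}=(u+v)(u^{2}-uv+v^{2})$ with $u=A(q^{3})$, $v=q\psi(q^{9})$ is exactly the right observation, and the root-of-unity evaluation of $\psi(q)\psi(\omega q)\psi(\omega^{2}q)$ goes through cleanly: the product rule $\prod_{k=0}^{2}(\omega^{k}q;\omega^{k}q)_{\infty}=(q^{3};q^{3})_{\infty}^{4}/(q^{9};q^{9})_{\infty}$ is verified just as you describe, and applying it to numerator and denominator of the product form of $\psi$ gives precisely $(q^{6};q^{6})_{\infty}^{8}(q^{9};q^{9})_{\infty}\big/\bigl((q^{3};q^{3})_{\infty}^{4}(q^{18};q^{18})_{\infty}^{2}\bigr)=\psi(q^{3})^{4}/\psi(q^{9})$.

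As for comparison: the paper does not actually prove this lemma. It is stated with a citation to Lemma~2.1 of Hirschhorn and Sellers and used as a black box throughout. So you have supplied a complete, self-contained proof where the paper simply imports the result. The cube-root-of-unity approach you chose is in fact the standard route to such $3$-dissection identities and is essentially what one finds in the cited source; there is nothing to correct.
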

Let us denote $s=A({{q}^{3}})$ and $t=\psi ({{q}^{9}})$ for convenience in this section. Then we can rewrite Lemma \ref{identity1} as
\begin{equation}\label{stone}
{{s}^{3}}+{{q}^{3}}{{t}^{3}}=\frac{\psi {{({{q}^{3}})}^{4}}}{\psi ({{q}^{9}})},
\end{equation}
\begin{equation}\label{sttwo}
\frac{1}{\psi (q)}=\frac{\psi ({{q}^{9}})}{\psi {{({{q}^{3}})}^{4}}}({{s}^{2}}-qst+{{q}^{2}}{{t}^{2}}).
\end{equation}

\begin{thm}\label{pod4mod9alpha}
For any integer $\alpha \ge 1$,  we have
\[\sum\limits_{n=0}^{\infty }{{{(-1)}^{n}}\mathrm{pod}_{-4}\Big({{3}^{\alpha }}n+\frac{{{3}^{\alpha }}+1}{2}\Big)q^n \equiv {{(-1)}^{\alpha-1 }}\frac{\psi {{({{q}^{3}})}^{4}}}{\psi {{(q)}^{8}}}} \pmod{9}.\]
\end{thm}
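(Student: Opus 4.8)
The plan is to prove the congruence by induction on $\alpha$, after first reducing everything to a statement about $1/\psi(q)^4$ and collapsing powers of $\psi$ modulo $9$. The first move absorbs the alternating sign: replacing $q$ by $-q$ in \eqref{podk} gives $\sum_{n\ge0}(-1)^n\mathrm{pod}_{-4}(n)q^n=1/\psi(q)^4$, so the coefficient of $q^{3n+2}$ already carries the sign $(-1)^n$ appearing in the theorem (since $(-1)^{3n+2}=(-1)^n$). Writing $R_\alpha(q)$ for the series on the left-hand side of the statement, I will produce $R_1(q)$ by a $3$-dissection of $1/\psi(q)^4$, and produce $R_{\alpha+1}(q)$ from $R_\alpha(q)$ by a further $3$-dissection; the whole task thus reduces to $3$-dissecting $1/\psi(q)^4$ and the conjectured right-hand side modulo $9$.

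The key simplification, which I expect to do most of the work, is the congruence $\psi(q)^9\equiv\psi(q^3)^3\pmod 9$. This follows from Lemma~\ref{basic} with $p=3$, $\alpha=2$ applied to $\psi(q)=(q^2;q^2)_\infty^2/(q;q)_\infty$: from $(q;q)_\infty^9\equiv(q^3;q^3)_\infty^3$ and $(q^2;q^2)_\infty^{18}\equiv(q^6;q^6)_\infty^6\pmod 9$ one obtains $\psi(q)^9\equiv(q^6;q^6)_\infty^6/(q^3;q^3)_\infty^3=\psi(q^3)^3$. Since $\psi(q^3)$ is invertible as a power series, this instantly collapses the two quantities I care about: $1/\psi(q)^4\equiv\psi(q)^5/\psi(q^3)^3$ and $\psi(q^3)^4/\psi(q)^8\equiv\psi(q)\psi(q^3)\pmod 9$. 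In particular the right-hand side of the theorem is, modulo $9$, just $(-1)^{\alpha-1}\psi(q)\psi(q^3)$, which is far easier to dissect than $\psi(q^3)^4/\psi(q)^8$.

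For the base case $\alpha=1$ I compute the residue class $2$ part of $1/\psi(q)^4\equiv\psi(q)^5/\psi(q^3)^3$. Writing $\psi(q)=s+qt$ with $s=A(q^3)$, $t=\psi(q^9)$ (the stated $3$-dissection), the factor $\psi(q^3)^3$ is a function of $q^3$, so I only need the terms of $(s+qt)^5$ of $q$-degree $\equiv2\pmod3$, namely $10s^3t^2q^2+t^5q^5=q^2t^2(10s^3+q^3t^3)$. Now \eqref{stone} gives $s^3+q^3t^3=\psi(q^3)^4/t$, hence $10s^3+q^3t^3=9s^3+\psi(q^3)^4/t\equiv\psi(q^3)^4/t\pmod9$; the cancellation of the $9s^3$ term is what makes the answer clean. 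Dividing by $\psi(q^3)^3$ leaves $q^2\psi(q^9)\psi(q^3)$, and comparing the coefficient of $q^{3n+2}$ (equivalently, divide by $q^2$ and replace $q^3$ by $q$) yields $R_1(q)\equiv\psi(q^3)\psi(q)\equiv\psi(q^3)^4/\psi(q)^8\pmod9$, as required.

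For the inductive step I use the bookkeeping of the residues of $3^\alpha m+\frac{3^\alpha+1}{2}$ modulo $3^{\alpha+1}$: the subprogression $m\equiv1\pmod3$ produces exactly the argument $3^{\alpha+1}n+\frac{3^{\alpha+1}+1}{2}$ of $R_{\alpha+1}$ (while $m\equiv2$ produces $3^{\alpha+1}n+\frac{5\cdot3^\alpha+1}{2}$). So, assuming $R_\alpha(q)\equiv(-1)^{\alpha-1}\psi(q)\psi(q^3)\pmod9$, I $3$-dissect $\psi(q)\psi(q^3)=(s+qt)\psi(q^3)$: since $\psi(q^3)$ lives in $q^3$, the residue $1$ part is $q\,\psi(q^9)\psi(q^3)$ and the residue $2$ part vanishes. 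Tracking the sign $(-1)^{3n+1}=(-1)^{n+1}$, the residue $1$ part equals $-R_{\alpha+1}(q)$, which gives $R_{\alpha+1}(q)\equiv(-1)^{\alpha}\psi(q)\psi(q^3)\equiv(-1)^{\alpha}\psi(q^3)^4/\psi(q)^8\pmod9$, completing the induction. As a free byproduct, the vanishing residue $2$ part yields the family $\mathrm{pod}_{-4}(3^{\alpha+1}n+\frac{5\cdot3^\alpha+1}{2})\equiv0\pmod9$ of the abstract. The only genuine obstacle is spotting the modulo-$9$ collapse $\psi(q)^9\equiv\psi(q^3)^3$ together with the $9s^3$ cancellation; everything after that is routine $3$-dissection bookkeeping.
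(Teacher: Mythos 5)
Your proof is correct, but it takes a genuinely lighter route than the paper's. The paper keeps negative powers of $\psi(q)$ throughout: it raises the identity (\ref{sttwo}) to the fourth power for the base case and to the eighth power for the inductive step, which forces it to expand $(s^2-qst+q^2t^2)^4$ and $(s^2-qst+q^2t^2)^8$ (with coefficients like $266$, $-1016$, $784$) and then reduce those multinomials modulo $9$ to powers of $s^3+q^3t^3$ before invoking (\ref{stone}). Your key move is instead the collapse $\psi(q)^9\equiv\psi(q^3)^3\pmod 9$, correctly extracted from Lemma \ref{basic} with $p=3$, $\alpha=2$ (the division by $(q;q)_\infty^9$ is legitimate since these series are units in $\mathbb{Z}[[q]]$). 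This turns $1/\psi(q)^4$ into $\psi(q)^5/\psi(q^3)^3$ and the right-hand side into $\psi(q)\psi(q^3)$, so the base case needs only the binomial dissection of $(s+qt)^5$ plus one application of (\ref{stone}) to kill the $9s^3$ term, and the inductive step becomes trivial: the residue-$1$ part of $(s+qt)\psi(q^3)$ is exactly $q\,\psi(q^9)\psi(q^3)$, with no further congruence work. Your sign bookkeeping ($(-1)^{3n+2}=(-1)^n$, $(-1)^{3n+1}=-(-1)^n$) and the progression arithmetic $3^\alpha(3n+1)+\tfrac{3^\alpha+1}{2}=3^{\alpha+1}n+\tfrac{3^{\alpha+1}+1}{2}$ are both right, and you get for free that the residue-$2$ part vanishes, which is the paper's subsequent theorem $\mathrm{pod}_{-4}\big(3^{\alpha+1}n+\tfrac{5\cdot3^\alpha+1}{2}\big)\equiv0\pmod 9$ — the paper has to cite Lemma 2.3 of \cite{Wang} for that step, whereas your argument makes it self-contained. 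The trade-off is that your simplification is tied to the modulus $9$: the paper's heavier exact dissections (e.g.\ (\ref{mod272}), (\ref{stpsi8}) and the explicit expansions) are reused in Section 3 to prove the internal congruences modulo $27$ and $81$, where a collapse that only holds modulo $9$ would not suffice.
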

\begin{proof}
We proceed by induction on $\alpha$.
From (\ref{podk}) and (\ref{sttwo}), we have
\begin{equation}\label{mod272}
\sum\limits_{n=0}^{\infty }{{{(-1)}^{n}}\mathrm{pod}_{-4}(n){{q}^{n}}}= \frac{1}{\psi {{(q)}^{4}}}=\frac{\psi {{({{q}^{9}})}^{4}}}{\psi {{({{q}^{3}})}^{16}}}{({ {{s}^{2}}-qst+{{q}^{2}}{{t}^{2}})}^{4}}.
\end{equation}
If we extract all the terms of the form   ${{q}^{3k+2}}$ in the expansion of ${{({{s}^{2}}-qst+{{q}^{2}}{{t}^{2}})}^{4}}$,  we obtain
\begin{equation}\label{mod273}
{{q}^{2}}(10{{s}^{6}}{{t}^{2}}-16{{q}^{3}}{{s}^{3}}{{t}^{5}}+{{q}^{6}}{{t}^{8}})\equiv {{q}^{2}}{{t}^{2}}{{({{s}^{3}}+{{q}^{3}}{{t}^{3}})}^{2}} \pmod{9}.
\end{equation}
Hence
\begin{displaymath}
\begin{split}
  \sum\limits_{n=0}^{\infty }{{{(-1)}^{3n+2}}\mathrm{pod}_{-4}(3n+2){{q}^{3n+2}}}
 & \equiv \frac{\psi {{({{q}^{9}})}^{4}}}{\psi {{({{q}^{3}})}^{16}}}{{q}^{2}}{{t}^{2}}{{({{s}^{3}}+{{q}^{3}}{{t}^{3}})}^{2}} \\
  & ={{q}^{2}}\frac{\psi {{({{q}^{9}})}^{4}}}{\psi {{({{q}^{3}})}^{8}}} \pmod{9}.
\end{split}
\end{displaymath}
Dividing both sides by ${{q}^{2}}$, then replacing  ${{q}^{3}}$ by  $q$, we get
\[\sum\limits_{n=0}^{\infty }{{{(-1)}^{n}}\mathrm{pod}_{-4}(3n+2){{q}^{n}}}\equiv \frac{\psi {{({{q}^{3}})}^{4}}}{\psi {{(q)}^{8}}} \pmod{9}.\]
Hence the result holds when $\alpha =1$.

Suppose
\[\sum\limits_{n=0}^{\infty }{{{(-1)}^{n}}\mathrm{pod}_{-4}\Big({{3}^{\alpha }}n+\frac{{{3}^{\alpha }}+1}{2}\Big){{q}^{n}}}\equiv {{(-1)}^{\alpha -1}}\frac{\psi {{({{q}^{3}})}^{4}}}{\psi {{(q)}^{8}}} \pmod{9}.\]
Applying (\ref{sttwo}) again, we obtain
\begin{equation}\label{stpsi8}
\frac{\psi {{({{q}^{3}})}^{4}}}{\psi {{(q)}^{8}}}=\frac{\psi {{({{q}^{9}})}^{8}}}{\psi {{({{q}^{3}})}^{28}}}{({ {{s}^{2}}-qst+{{q}^{2}}{{t}^{2}})}^{8}}.
\end{equation}
If we extract all the terms of the form  ${{q}^{3k+1}}$  in the expansion of ${{({{s}^{2}}-qst+{{q}^{2}}{{t}^{2}})}^{8}}$, we obtain
\begin{displaymath}
\begin{split}
  & q(-8{{s}^{15}}t+266{{q}^{3}}{{s}^{12}}{{t}^{4}}-1016{{q}^{6}}{{s}^{9}}{{t}^{7}}+784{{q}^{9}}{{s}^{6}}{{t}^{10}}-112{{q}^{12}}{{s}^{3}}{{t}^{13}}+{{q}^{15}}{{t}^{16}}) \\
 & \equiv  qt({{s}^{15}}+5{{q}^{3}}{{s}^{12}}{{t}^{3}}+10{{q}^{6}}{{s}^{9}}{{t}^{6}}+10{{q}^{9}}{{s}^{6}}{{t}^{9}}+5{{q}^{12}}{{s}^{3}}{{t}^{12}}+{{q}^{15}}{{t}^{15}}) \\
 & \equiv qt{{({{s}^{3}}+{{q}^{3}}{{t}^{3}})}^{5}} \pmod{9}.
\end{split}
\end{displaymath}
Hence
\begin{align} \nonumber
\begin{split}
   &\sum\limits_{n=0}^{\infty }{{{(-1)}^{3n+1}}\mathrm{pod}_{-4}\Big({{3}^{\alpha }}(3n+1)+\frac{{{3}^{\alpha }}+1}{2}\Big){{q}^{3n+1}}} \\
   & \equiv {{(-1)}^{\alpha -1}}qt{{({{s}^{3}}+{{q}^{3}}{{t}^{3}})}^{5}}\frac{\psi {{({{q}^{9}})}^{8}}}{\psi {{({{q}^{3}})}^{28}}} \\ \nonumber
 & ={{(-1)}^{\alpha -1}}q\frac{\psi {{({{q}^{9}})}^{4}}}{\psi {{({{q}^{3}})}^{8}}} \pmod{9}.
\end{split}
\end{align}
Dividing both sides by $-q$  and then replacing  ${{q}^{3}}$  by  $q$,  we get
\[\sum\limits_{n=0}^{\infty }{{{(-1)}^{n}}\mathrm{pod}_{-4}\Big({3}^{\alpha +1}n+\frac{{{3}^{\alpha +1}}+1}{2}\Big){{q}^{n}}}\equiv {{(-1)}^{(\alpha +1)-1}}\frac{\psi {{({{q}^{3}})}^{4}}}{\psi {{({{q}})}^{8}}} \pmod{9}.\]
This implies that the result holds for $\alpha +1$.

By induction on $\alpha$, we complete our proof.
\end{proof}

For any positive integer $n$ and prime $p$,   we denote by ${{v}_{p}}(n)$ the power of  $p$ in the unique prime factorization  of $n$. Let $\sigma(n)$ denote the sum of positive divisors of $n$.
\begin{cor}\label{sigma}
For any integer  $n\ge 0$, we have
\[\mathrm{pod}_{-4}(3n+2)\equiv {{(-1)}^{n}}\sigma (2n+1) \pmod{3}.\]
Moreover,  $\mathrm{pod}_{-4}(3n+2)\equiv 0$ \text{\rm{(mod $3$)}}  if and only if one of the following conditions hold.\\
(1) There exists a prime  $p\equiv 1$ \text{\rm{(mod $3$)}} such that ${{v}_{p}}(2n+1)\equiv 2$ \text{\rm{(mod $3$)}}.\\
(2) There exists a prime  $p\equiv 2$ \text{\rm{(mod $3$)}} such that ${{v}_{p}}(2n+1)\equiv 1$ \text{\rm{(mod $2$)}}.
\end{cor}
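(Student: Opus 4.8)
The plan is to specialize Theorem~\ref{pod4mod9alpha} to the base case $\alpha=1$ and then pass from modulus $9$ to modulus $3$. The $\alpha=1$ instance reads
\[
\sum_{n=0}^{\infty}(-1)^n\mathrm{pod}_{-4}(3n+2)q^n\equiv\frac{\psi(q^3)^4}{\psi(q)^8}\pmod 9,
\]
and since any congruence modulo $9$ holds modulo $3$, the same relation is valid modulo $3$. My first task is to simplify the right-hand side. Using the second congruence of Lemma~\ref{basic} with $p=3$, namely $\psi(q)^3\equiv\psi(q^3)\pmod 3$, I would replace $\psi(q^3)^4$ by $\psi(q)^{12}$ modulo $3$; because $\psi(q)$ has constant term $1$ the series $1/\psi(q)^8$ has integer coefficients, so it may legitimately be multiplied into the congruence. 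This yields
\[
\frac{\psi(q^3)^4}{\psi(q)^8}\equiv\frac{\psi(q)^{12}}{\psi(q)^8}=\psi(q)^4\pmod 3.
\]

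The crucial ingredient is then the classical representation identity for four triangular numbers,
\[
\psi(q)^4=\sum_{n=0}^{\infty}\sigma(2n+1)q^n,
\]
whose coefficient of $q^n$ counts the number of ways of writing $n$ as a sum of four triangular numbers. Combining it with the previous step gives
\[
\sum_{n=0}^{\infty}(-1)^n\mathrm{pod}_{-4}(3n+2)q^n\equiv\sum_{n=0}^{\infty}\sigma(2n+1)q^n\pmod 3,
\]
and comparing coefficients of $q^n$, then multiplying through by the unit $(-1)^n$, produces the asserted relation $\mathrm{pod}_{-4}(3n+2)\equiv(-1)^n\sigma(2n+1)\pmod 3$. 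I expect this triangular-number identity to be the one point genuinely requiring justification; if it is not simply quoted, one can derive it from Jacobi's four-square theorem after the substitution $8n+4=\sum_{i=1}^{4}(2x_i+1)^2$, which matches four triangular numbers with four odd squares.

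For the characterization of when $\mathrm{pod}_{-4}(3n+2)\equiv0\pmod 3$, since $(-1)^n$ is a unit the condition is equivalent to $3\mid\sigma(2n+1)$. Writing $m=2n+1$ and using the multiplicativity of $\sigma$, I would reduce to evaluating each local factor $\sigma(p^{a})\pmod 3$, where $a=v_p(m)$. A direct computation gives $\sigma(3^{a})\equiv1\pmod 3$, so the prime $3$ never contributes; for $p\equiv1\pmod 3$ one finds $\sigma(p^{a})\equiv a+1\pmod 3$, which vanishes exactly when $a\equiv2\pmod 3$; and for $p\equiv2\pmod 3$ the alternating sum $\sum_{k=0}^{a}(-1)^k$ shows $\sigma(p^{a})\equiv0\pmod 3$ exactly when $a$ is odd. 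Hence $3\mid\sigma(m)$ precisely when some local factor vanishes modulo $3$, which is exactly conditions (1) and (2) with $m=2n+1$ (noting that $m$ is odd, so $p=2$ never divides it). This final part is elementary and presents no real obstacle beyond the routine case analysis.
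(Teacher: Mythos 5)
Your proposal is correct and follows essentially the same route as the paper: specialize Theorem~\ref{pod4mod9alpha} at $\alpha=1$, use $\psi(q)^3\equiv\psi(q^3)\pmod 3$ from Lemma~\ref{basic} to reduce the right-hand side to $\psi(q)^4$, invoke the classical identity $t_4(n)=\sigma(2n+1)$ (which the paper simply cites as Theorem 3.6.3 of Berndt), and finish with the same local factor-by-factor analysis of $\sigma(2n+1)\pmod 3$. The only cosmetic differences are your explicit justification for multiplying by $1/\psi(q)^8$ and your use of the alternating sum instead of the geometric-series formula in the case $p\equiv 2\pmod 3$; both are harmless.
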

\begin{proof}
Let $\alpha =1$ in Theorem \ref{pod4mod9alpha}. By Lemma \ref{basic}, we have
\[\sum\limits_{n=0}^{\infty }{{{(-1)}^{n}}\mathrm{pod}_{-4}(3n+2){{q}^{n}}}\equiv \frac{\psi {{({{q}^{3}})}^{4}}}{\psi {{(q)}^{8}}}\equiv \psi {{(q)}^{4}}=\sum\limits_{n=0}^{\infty }{{{t}_{4}}(n){{q}^{n}}} \pmod{3}.\]
Hence
\[\mathrm{pod}_{-4}(3n+2)\equiv {{(-1)}^{n}}{{t}_{4}}(n) \pmod{3}.\]
From Theorem 3.6.3 in \cite{Bruce}, we know ${{t}_{4}}(n)=\sigma (2n+1)$. Hence  we proved the first assertion.

We write the unique prime factorization of $2n+1$ as $2n+1 = \prod\nolimits_{p|2n+1}{{{p}^{{{v}_{p}}(2n+1)}}}$. Because
\[\sigma (2n+1)=\prod\nolimits_{p|2n+1}{\big(1+p+\cdots +{{p}^{{{v}_{p}}(2n+1)}}\big)},\]
we see that $3|\sigma (2n+1)$ if and only if there exists a prime $p|2n+1$ such that   $3|(1+p+\cdots +{{p}^{{{v}_{p}}(2n+1) }})$.

If $p=3$,  then  $1+p+\cdots +{{p}^{{{v}_{p}}(2n+1)}}\equiv 1$ (mod $3$).   If  $p\equiv 1$ (mod $3$),  then $1+p+\cdots +{{p}^{{{v}_{p}}(2n+1)}}\equiv 1+ {{v}_{p}}(2n+1)$ (mod $3$).  Hence $3|(1+p+\cdots +{{p}^{{{v}_{p}}(2n+1)}})$ if and only if  ${{v}_{p}}(2n+1) \equiv 2$ (mod $3$).

  If  $p\equiv 2$ (mod $3$),  then since
\[1+p+\cdots +{{p}^{{{v}_{p}}(2n+1)}}=\frac{{{p}^{{{v}_{p}}(2n+1) +1}}-1}{p-1},\]
we know that $3|1+p+\cdots +{{p}^{{{v}_{p}}(2n+1)}}$ if and only if  ${{p}^{{{v}_{p}}(2n+1)+1}}\equiv 1$ (mod $3$), i.e.,  ${{v}_{p}}(2n+1)$ is odd.
\end{proof}

\begin{thm}
For any integers $\alpha \ge 1$ and  $n\ge 0$,  we have
\[\mathrm{pod}_{-4}\Big({{3}^{\alpha +1}}n+\frac{5\cdot {{3}^{\alpha }}+1}{2}\Big)\equiv 0 \pmod{9}.\]
\end{thm}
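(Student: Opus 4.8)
The plan is to combine Theorem~\ref{pod4mod9alpha} with a $3$-dissection of its right-hand side. First I would record the arithmetic identity linking the two progressions: since
\[
3^{\alpha}(3n+2)+\frac{3^{\alpha}+1}{2}=3^{\alpha+1}n+\frac{5\cdot 3^{\alpha}+1}{2},
\]
the number $\mathrm{pod}_{-4}\big(3^{\alpha+1}n+\frac{5\cdot 3^{\alpha}+1}{2}\big)$ is precisely the coefficient attached to the subprogression $3n+2$ inside the generating function of Theorem~\ref{pod4mod9alpha}. Hence it suffices to extract the terms $q^{3n+2}$ from $(-1)^{\alpha-1}\frac{\psi(q^{3})^{4}}{\psi(q)^{8}}$ and to show that each such coefficient is divisible by $9$.

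Next I would apply (\ref{stpsi8}) to write $\frac{\psi(q^{3})^{4}}{\psi(q)^{8}}=\frac{\psi(q^{9})^{8}}{\psi(q^{3})^{28}}(s^{2}-qst+q^{2}t^{2})^{8}$. Because $s=A(q^{3})$ and $t=\psi(q^{9})$ are power series in $q^{3}$, the prefactor $\frac{\psi(q^{9})^{8}}{\psi(q^{3})^{28}}$ carries only exponents $\equiv 0\pmod 3$; therefore the $q^{3n+2}$ part of the product arises solely from the $q^{3k+2}$ part of $(s^{2}-qst+q^{2}t^{2})^{8}$. I would then expand this eighth power multinomially and collect, exactly as in the base case $\alpha=1$ of Theorem~\ref{pod4mod9alpha}, those terms whose explicit power of $q$ is $\equiv 2\pmod 3$.

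The computational heart of the argument is this extraction. Writing a generic surviving term as $s^{16-m}t^{m}q^{m}$ with $m=b+2c$, where $b,c$ count the chosen factors $-qst$ and $q^{2}t^{2}$, the admissible values are $m\in\{2,5,8,11,14\}$, and summing the relevant multinomial coefficients gives
\[
36\,s^{14}t^{2}q^{2}-504\,s^{11}t^{5}q^{5}+1107\,s^{8}t^{8}q^{8}-504\,s^{5}t^{11}q^{11}+36\,s^{2}t^{14}q^{14}.
\]
The decisive point is that every coefficient appearing here---namely $36$, $504$ and $1107$---is divisible by $9$, so the whole expression vanishes modulo $9$. I expect the verification of these three coefficient evaluations to be the main (if routine) obstacle, since this is the only place where the specific arithmetic of the eighth power enters, in contrast to the $q^{3k+1}$ extraction used for the inductive step of Theorem~\ref{pod4mod9alpha}.

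Finally, multiplying back by the $q^{3}$-series prefactor $\frac{\psi(q^{9})^{8}}{\psi(q^{3})^{28}}$ and by $(-1)^{\alpha-1}$ keeps every coefficient of $q^{3n+2}$ divisible by $9$. Dividing out $q^{2}$ and replacing $q^{3}$ by $q$, as in the earlier proofs, I would read off
\[
\mathrm{pod}_{-4}\Big(3^{\alpha+1}n+\frac{5\cdot 3^{\alpha}+1}{2}\Big)\equiv 0 \pmod{9}
\]
for all $\alpha\ge 1$ and $n\ge 0$, which completes the argument. Note that the overall sign and the factor $(-1)^{3n+2}$ from the left-hand side are irrelevant to the conclusion, since the right-hand side is already $\equiv 0\pmod 9$.
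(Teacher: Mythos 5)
Your argument is correct, and its computational core checks out. In the expansion of $(s^{2}-qst+q^{2}t^{2})^{8}$, a term coming from choosing $a$ factors $s^{2}$, $b$ factors $-qst$ and $c$ factors $q^{2}t^{2}$ contributes $(-1)^{b}\tbinom{8}{a,b,c}s^{16-m}t^{m}q^{m}$ with $m=b+2c$, so the exponents $\equiv 2 \pmod 3$ are exactly $m\in\{2,5,8,11,14\}$, and the signed multinomial sums are $36$, $-504$, $1107$, $-504$, $36$ (the last two by the symmetry $m\leftrightarrow 16-m$), each divisible by $9$. Since $s$, $t$ and the prefactor $\psi(q^{9})^{8}/\psi(q^{3})^{28}$ in (\ref{stpsi8}) are integral power series in $q^{3}$, every coefficient of $q^{3n+2}$ in $\psi(q^{3})^{4}/\psi(q)^{8}$ is therefore divisible by $9$, and Theorem \ref{pod4mod9alpha} then gives the congruence, the signs being irrelevant as you note. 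Where you differ from the paper is precisely at this key step: the paper computes nothing here, but simply quotes Lemma 2.3 of \cite{Wang} (the author's earlier work on $\mathrm{pod}_{-3}$), which asserts exactly the divisibility fact you establish, and combines it with Theorem \ref{pod4mod9alpha}. So the overall reduction is identical, but you have in effect reproved the cited lemma by a third extraction of the same kind as the two already carried out in the paper (the $q^{3k+2}$ extraction from the fourth power in the base case, and the $q^{3k+1}$ extraction from the eighth power in the inductive step). The paper's version buys brevity by leaning on prior work; yours buys a self-contained proof with no dependence on the external preprint, at the cost of one more routine multinomial computation.
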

\begin{proof}
By Lemma 2.3 in \cite{Wang}, we know the coefficient of  ${{q}^{3n+2}}$ in the series expansion of $\frac{\psi {{({{q}^{3}})}^{4}}}{\psi {{(q)}^{8}}}$ is divisible by $9$. Together with Theorem \ref{pod4mod9alpha}, we complete our proof.
\end{proof}

Note that for different integers  $\alpha \ge 1$, the arithmetic sequence $\big\{{{3}^{\alpha +1}}n+\frac{5\cdot {{3}^{\alpha }}+1}{2}:n=0,1,2,\cdots \big\}$ are disjoint, and they account for
\[\frac{1}{3^{2}}+\frac{1}{{{3}^{3}}} +\cdots +\frac{1}{{{3}^{\alpha +1}}}+\cdots =\frac{1}{6}\]
of all nonnegative integers. We get the following corollary immediately.
\begin{cor}
$\mathrm{pod}_{-4}(n)$ is divisible by $9$ for at least  $1/6$ of all nonnegative integers.
\end{cor}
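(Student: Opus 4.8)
The plan is to derive this congruence from Theorem \ref{pod4mod9alpha} by a short reindexing, reducing everything to a single divisibility statement about the Fourier coefficients of $\frac{\psi(q^3)^4}{\psi(q)^8}$. First I would match exponents in the congruence of Theorem \ref{pod4mod9alpha}: the coefficient of $q^{3n+2}$ on the left selects the term indexed by $3^{\alpha}(3n+2)+\frac{3^\alpha+1}{2}$, and the identity $3^{\alpha}(3n+2)+\frac{3^\alpha+1}{2}=3^{\alpha+1}n+\frac{5\cdot 3^\alpha+1}{2}$ together with $(-1)^{3n+2}=(-1)^n$ yields
\[(-1)^n\,\mathrm{pod}_{-4}\Big(3^{\alpha+1}n+\frac{5\cdot 3^\alpha+1}{2}\Big)\equiv (-1)^{\alpha-1}\,[q^{3n+2}]\,\frac{\psi(q^3)^4}{\psi(q)^8}\pmod 9.\]
Hence the theorem follows at once provided every coefficient of $q^{3n+2}$ in $\frac{\psi(q^3)^4}{\psi(q)^8}$ is divisible by $9$; this is precisely the lemma quoted from \cite{Wang}, which I would establish directly as follows.

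Using \eqref{stpsi8} I would write $\frac{\psi(q^3)^4}{\psi(q)^8}=\frac{\psi(q^9)^8}{\psi(q^3)^{28}}(s^2-qst+q^2t^2)^8$ with $s=A(q^3)$ and $t=\psi(q^9)$. Since the prefactor $\frac{\psi(q^9)^8}{\psi(q^3)^{28}}$ is a power series in $q^3$, it contributes only $q^{3k}$-terms, so it suffices to show that the collection of all $q^{3k+2}$-terms in the multinomial expansion of $(s^2-qst+q^2t^2)^8$ has every coefficient divisible by $9$. The key simplification is that a generic term $\frac{8!}{a!\,b!\,c!}(-1)^b q^{b+2c}s^{2a+b}t^{b+2c}$ (with $a+b+c=8$) depends, after setting $P:=b+2c$, only on $P$: its explicit $q$-power is $P$, its $t$-exponent is $P$, and its $s$-exponent is $2a+b=16-P$. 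Consequently the extracted part equals $\sum_{P\equiv 2\,(3)}C_P\,q^{P}s^{16-P}t^{P}$, where
\[C_P=\sum_{\substack{b+2c=P\\ b+c\le 8}}(-1)^b\,\frac{8!}{(8-b-c)!\,b!\,c!},\]
and I am reduced to checking $C_P\equiv 0\pmod 9$ for the finitely many admissible residues $P\in\{2,5,8,11,14\}$.

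The final step is the only computation: evaluating these five sums gives $C_2=C_{14}=36$, $C_5=C_{11}=-504$, and $C_8=1107$, each divisible by $9$, which completes the proof. I expect this finite verification to be the main (indeed the only real) obstacle, but it is entirely mechanical; moreover the palindromic symmetry $s\leftrightarrow qt$ of the form $s^2-qst+q^2t^2$ forces $C_P=C_{16-P}$, so only the three values $P\in\{2,5,8\}$ need to be computed by hand. Everything else is bookkeeping with the reindexing and with the observation that $\frac{\psi(q^9)^8}{\psi(q^3)^{28}}$ is a series in $q^3$.
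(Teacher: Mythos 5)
Your proposal proves the wrong statement. What you establish --- correctly, and in fact in a nicely self-contained way, since you verify the multinomial sums $C_2=C_{14}=36$, $C_5=C_{11}=-504$, $C_8=1107$ directly instead of citing Lemma 2.3 of \cite{Wang} --- is the \emph{Theorem} preceding this Corollary, namely the congruence family $\mathrm{pod}_{-4}\big(3^{\alpha+1}n+\frac{5\cdot3^{\alpha}+1}{2}\big)\equiv 0 \pmod{9}$. (Your reindexing $3^{\alpha}(3n+2)+\frac{3^{\alpha}+1}{2}=3^{\alpha+1}n+\frac{5\cdot3^{\alpha}+1}{2}$ is right, and your reduction to $9\mid C_P$ for $P\equiv 2\pmod{3}$ is valid because the prefactor $\frac{\psi(q^{9})^{8}}{\psi(q^{3})^{28}}$ and the series $s,t$ all have integer coefficients and involve only powers of $q^{3}$.) But the statement to be proved here is a \emph{density} assertion: that $9\mid\mathrm{pod}_{-4}(n)$ for at least $1/6$ of all nonnegative integers. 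Nothing in your write-up counts anything, so the constant $1/6$ never appears and the Corollary is not proved.

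To close the gap you need the two observations that constitute the paper's own (very short) proof. First, the progressions $P_\alpha=\big\{3^{\alpha+1}n+\frac{5\cdot3^{\alpha}+1}{2}: n\ge 0\big\}$ are pairwise disjoint as $\alpha$ ranges over the positive integers: if $m\in P_\alpha$ then $2m-1=2\cdot 3^{\alpha+1}n+5\cdot 3^{\alpha}=3^{\alpha}(6n+5)$ with $3\nmid 6n+5$, so $v_{3}(2m-1)=\alpha$ and hence $m$ determines $\alpha$. Second, $P_\alpha$ has natural density $3^{-(\alpha+1)}$, so by disjointness the union $\bigcup_{\alpha\ge 1}P_\alpha$ has lower density at least $\sum_{\alpha=1}^{N}3^{-(\alpha+1)}$ for every $N$, hence at least $\sum_{\alpha=1}^{\infty}3^{-(\alpha+1)}=\frac{1}{6}$. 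Only when these two facts are combined with the congruence family (which you did prove) does the Corollary follow; without disjointness the densities of the individual progressions cannot be added, and without the geometric-series evaluation there is no way to arrive at the bound $1/6$.
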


\section{Some Internal Congruences}
If we let  $\alpha =1$ and $\alpha =2$ in Theorem \ref{pod4mod9alpha}, respectively, we deduce that
\[\mathrm{pod}_{-4}(9n+5)\equiv -\mathrm{pod}_{-4}(3n+2) \pmod{9}.\]
By more careful treatment, we can improve this congruence to the following theorem.
\begin{thm}\label{interpod4}
For any integer $n \ge 0$, we have
\begin{displaymath}
\begin{split}
   \mathrm{pod}_{-4}(27n+5)&\equiv -\mathrm{pod}_{-4}(9n+2) \pmod{9}, \\ \nonumber
  \mathrm{pod}_{-4}(27n+14)&\equiv -\mathrm{pod}_{-4}(9n+5) \pmod{81}, \\ \nonumber
  \mathrm{pod}_{-4}(27n+23)&\equiv -\mathrm{pod}_{-4}(9n+8) \pmod{27}.
\end{split}
\end{displaymath}
\end{thm}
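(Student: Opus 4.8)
The plan is to recognize all three congruences as a single $3$-dissection statement. Put $c(N)=(-1)^N\mathrm{pod}_{-4}(N)$, so that $\sum_N c(N)q^N=\psi(q)^{-4}$, and set
\[
H(q):=\sum_{n\ge 0}c(3n+2)q^n=\sum_{n\ge 0}(-1)^n\mathrm{pod}_{-4}(3n+2)q^n,\qquad
G(q):=\sum_{n\ge 0}(-1)^n\mathrm{pod}_{-4}(9n+5)q^n.
\]
The progressions $9n+2,\,9n+5,\,9n+8$ are exactly the three residue classes of the $3$-dissection of $H$, while $27n+5,\,27n+14,\,27n+23$ are those of $G$; moreover the middle component of $H$ is precisely $-G$. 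Writing $F_r,H_r,G_r$ for the (reindexed) $q^{3n+r}$ parts, and accounting for the signs $(-1)^n$, the three assertions are equivalent to
\[
G_0\equiv -H_0\pmod 9,\qquad G_1\equiv -H_1\pmod{81},\qquad G_2\equiv -H_2\pmod{27}.
\]
Thus the whole theorem becomes a comparison of iterated $3$-dissections of the single series $H$.

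First I would record an \emph{exact} generating function for $H$, rather than its value modulo $9$ only. Retaining the exact coefficients in the extraction of the $q^{3k+2}$ part carried out in the proof of Theorem \ref{pod4mod9alpha}, and then simplifying by means of the identity $\psi(q)^4=\psi(q^3)\big(A(q)^3+q\psi(q^3)^3\big)$ (which is (\ref{stone}) after $q^3\mapsto q$), one arrives at the clean splitting
\[
H(q)=\frac{\psi(q^3)^4}{\psi(q)^8}+9\,\frac{A(q)^3\psi(q^3)^6\big(A(q)^3-2q\psi(q^3)^3\big)}{\psi(q)^{16}}=:F(q)+9R(q).
\]
This isolates the principal part $F$, which already governs the congruences modulo $9$ through Theorem \ref{pod4mod9alpha}, together with an explicit correction $9R$.

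Next I would compute the $3$-dissections of $F$ and of $R$ as explicit theta quotients, using the dissection identity (\ref{sttwo}) raised to the relevant powers and repeatedly collapsing the factors $A(q^3)^3+q^3\psi(q^9)^3$ back to $\psi(q^3)^4/\psi(q^9)$ via (\ref{stone}); this is the engine already driving the inductive step of Theorem \ref{pod4mod9alpha}, now pushed one level deeper with the coefficients retained modulo higher powers of $3$. Since $G=-H_1$, feeding the splitting $H=F+9R$ through this machine yields
\[
G_r+H_r=\big[F_r-(F_1)_r\big]+9\big[R_r-(R_1)_r\big]\qquad(r=0,1,2).
\]
The self-similarity $F_1\equiv F\pmod 9$, already implicit in the inductive step of Theorem \ref{pod4mod9alpha}, makes the first bracket divisible by $9$ for every $r$; this is the common modulo-$9$ core shared by all three congruences.

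The hard part is the delicate $3$-adic bookkeeping that produces the \emph{different} moduli $9,81,27$ for $r=0,1,2$. For the improvements one must show that the combination $\big[F_1-(F_1)_1\big]+9\big[R_1-(R_1)_1\big]$ is in fact divisible by $81$, and that $\big[F_2-(F_1)_2\big]+9\big[R_2-(R_1)_2\big]$ is divisible by $27$, whereas for $r=0$ no gain beyond the base modulus occurs. This forces the principal differences $F_r-(F_1)_r$ and the correction differences $R_r-(R_1)_r$ to acquire extra factors of $3$ precisely for $r=1,2$, and establishing this — two additional powers of $3$ for the class $r=1$, one for $r=2$, none for $r=0$ — is the technical heart of the argument. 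Concretely it reduces to refined divisibility statements for the dissection components of $F$ and $R$, in the spirit of Lemma 2.3 of \cite{Wang} asserting that the $q^{3n+2}$ coefficient of $\psi(q^3)^4/\psi(q)^8$ is divisible by $9$, proved by evaluating the multinomial coefficients in the expansions of $\big(A(q^3)^2-qA(q^3)\psi(q^9)+q^2\psi(q^9)^2\big)^{N}$ for $N=8,16$ modulo the relevant powers of $3$ (up to $81$) and checking that the non-principal terms carry exactly the predicted powers of $3$. Once these divisibilities are secured, comparison of coefficients in the displayed identities delivers the three congruences simultaneously.
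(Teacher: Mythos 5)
Your setup is sound, and your exact splitting $H=F+9R$ is in fact equivalent to the paper's own exact identity
\[
\sum_{n\ge 0}(-1)^n\mathrm{pod}_{-4}(3n+2)q^n
=10\,\frac{\psi(q^3)^4}{\psi(q)^8}-36q\,\frac{\psi(q^3)^8}{\psi(q)^{12}}+27q^2\,\frac{\psi(q^3)^{12}}{\psi(q)^{16}},
\]
as one checks by substituting $A(q)^3=\bigl(\psi(q)^4-q\psi(q^3)^4\bigr)/\psi(q^3)$ into your $R$. Your reduction of the theorem to $G_r\equiv -H_r$ for $r=0,1,2$ and your treatment of the modulo-$9$ part via $F_1\equiv F\pmod 9$ are both correct. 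But the theorem's content is the lift to moduli $81$ and $27$, and there your proposal stops at ``one must show'' and ``once these divisibilities are secured'': you never compute any dissection component modulo $81$, so the two harder congruences are asserted as a plan, not proved. In the paper this is exactly the work done in Lemma \ref{pod4mod27} (the mod-$81$ expression for $\sum(-1)^n\mathrm{pod}_{-4}(9n+5)q^n$, obtained from the explicit extractions (\ref{3k1mod81}), (\ref{st12mod9}), (\ref{term2})) and in the extraction arguments that follow; nothing in your write-up substitutes for it.

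Moreover, the mechanism you predict — that the differences $F_r-(F_1)_r$ and $R_r-(R_1)_r$ \emph{separately} acquire extra powers of $3$, more for $r=1$ than for $r=2$ — does not match how the gains actually arise, so even the executed version of your plan would need restructuring. For $r=1$, the paper finds that the $q^{3k+1}$ component of $H+G$ equals $27\bigl(-2q\,\psi(q^9)^4/\psi(q^3)^8-q\,\psi(q^3)^4\bigr)$ modulo $81$: the passage from $27$ to $81$ comes from a \emph{cross-cancellation} between the $F$-derived term and the $\psi(q^3)^4$ term, via $\psi(q^9)^4/\psi(q^3)^8\equiv\psi(q^3)^4\pmod 3$, so that $-2x-y\equiv-3y\equiv 0\pmod 3$; the individual brackets need not gain anything. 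For $r=2$, the gain is not a divisibility statement at all: modulo $27$ one has $H+G\equiv 18\bigl(\psi(q^3)\psi(q)-q\psi(q^3)^4\bigr)$, and the $q^{3k+2}$ component vanishes \emph{identically} because the $3$-dissection $\psi(q)=A(q^3)+q\psi(q^9)$ shows the right-hand side simply has no terms in that residue class. Your framework of ``extra factors of $3$ for the class $r=2$'' would send you looking for coefficient divisibilities that are not the actual reason the congruence holds. So the gap is twofold: the decisive computations are missing, and the sketch of how they would go points in a direction that does not reproduce the paper's (correct) cancellation and vanishing arguments.
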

Before we prove this theorem, we need to establish the following lemma first.
\begin{lem}\label{pod4mod27}
We have
\[\sum\limits_{n=0}^{\infty }{{{(-1)}^{n}}\mathrm{pod}_{-4}(3n+2){{q}^{n}}}=10\frac{\psi {{({{q}^{3}})}^{4}}}{\psi {{(q)}^{8}}}-36q\frac{\psi {{({{q}^{3}})}^{8}}}{\psi {{(q)}^{12}}}+27{{q}^{2}}\frac{\psi {{({{q}^{3}})}^{12}}}{\psi {{(q)}^{16}}}.\]
\[\sum\limits_{n=0}^{\infty }{{{(-1)}^{n}}\mathrm{pod}_{-4}(9n+5){{q}^{n}}}\equiv 35\frac{\psi {{({{q}^{3}})}^{4}}}{\psi {{(q)}^{8}}}+18q\frac{\psi {{({{q}^{3}})}^{8}}}{\psi {{(q)}^{12}}}-27{{q}^{2}}\frac{\psi {{({{q}^{3}})}^{12}}}{\psi {{(q)}^{16}}}-27q \psi {{({{q}^{3}})}^{4}} \pmod{81}.\]
\end{lem}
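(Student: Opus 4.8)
The plan is to prove the two formulas in turn, deriving the exact identity first and then feeding it back through the $3$-dissection (\ref{sttwo}) to reach the congruence modulo $81$. Throughout I keep $s=A(q^3)$, $t=\psi(q^9)$ and abbreviate $P=s^3+q^3t^3$ and $W=s^2-qst+q^2t^2$, so that (\ref{stone}) reads $P=\psi(q^3)^4/t$. The structural fact I use repeatedly is that $s$ and $t$ are series in $q^3$; hence in any power of $W$ the residue of the $q$-exponent modulo $3$ is governed entirely by the explicit powers of $q$, so each $3$-dissection reduces to a multinomial count that passes through the $q^3$-prefactors untouched.

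For the first (exact) identity I start from (\ref{mod272}) and extract the terms $q^{3k+2}$ from $W^4$. This is the selection that produced (\ref{mod273}), but carried out exactly it gives $q^2(10s^6t^2-16q^3s^3t^5+q^6t^8)$. With $x=s^3$ and $y=q^3t^3$ the bracket equals $t^2(10x^2-16xy+y^2)$, and the elementary identity
\[
10x^2-16xy+y^2=10(x+y)^2-36(x+y)y+27y^2
\]
turns it into $t^2(10P^2-36q^3t^3P+27q^6t^6)$. Substituting $P=\psi(q^3)^4/t$ collapses this to $10\psi(q^3)^8-36q^3\psi(q^3)^4t^4+27q^6t^8$; multiplying by the prefactor $\psi(q^9)^4/\psi(q^3)^{16}$ of (\ref{mod272}), dividing by $q^2$ and replacing $q^3$ by $q$ (and using $(-1)^{3n+2}=(-1)^n$) yields the first displayed formula.

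For the congruence I apply (\ref{sttwo}) to each of the three terms of the first identity and select the coefficients of $q^{3k+1}$, which reproduces $\mathrm{pod}_{-4}(3(3m+1)+2)=\mathrm{pod}_{-4}(9m+5)$. The coefficients $10,-36,27$ mean the three dissections are needed only modulo $81,9,3$ respectively. For $10\,\psi(q^3)^4/\psi(q)^8$, which by (\ref{stpsi8}) equals $10\,\psi(q^9)^8\psi(q^3)^{-28}W^8$, I reuse the exact $q^{3k+1}$ part of $W^8$ already recorded inside the proof of Theorem \ref{pod4mod9alpha}; rewritten through $P$ and $y$ it equals $t\,f$ with
\[
f=-8P^5+306P^4y-2160P^3y^2+5508P^2y^3-5832Py^4+2187y^5,
\]
and since every coefficient beyond $-8$ is a multiple of $9$, only $P^5,P^4y,P^3y^2$ survive modulo $81$. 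For $-36\,\psi(q^3)^8/\psi(q)^{12}$ I split $W^3=U_0+U_1+U_2$ according to the residue of the $q$-exponent: here $U_0\equiv P^2\pmod 9$ while $U_1\equiv U_2\equiv 0\pmod 3$, so modulo $9$ the $q^{3k}$ block of $W^{12}=(W^3)^4$ is $U_0^4\equiv P^8$. For $27\,\psi(q^3)^{12}/\psi(q)^{16}$ I use $W^3\equiv P^2\pmod 3$, whence $W^{16}\equiv P^{10}W\pmod 3$, whose $q^{3k+2}$ part is $P^{10}q^2t^2$. Substituting $P=\psi(q^3)^4/t$ in the three contributions, summing, dividing by $-q$ and replacing $q^3$ by $q$ gives the right-hand side.

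The real labor is the modulo-$81$ accounting for the first term: one must expand $f$ far enough to see which multiples of $9$ persist, since these combine with the other two terms to fix the coefficients $35$ and $18$. The final summand $-27q\,\psi(q^3)^4$ is a byproduct of this reduction rather than a separate input. After combining the three contributions the coefficient of $q^4\,\psi(q^9)^8/\psi(q^3)^{12}$ is $90\equiv 9\pmod{81}$; writing $9=27-18$ and using $\psi(q^3)^{12}\equiv\psi(q^9)^4\pmod 3$ from Lemma \ref{basic} to convert $27q^4\,\psi(q^9)^8/\psi(q^3)^{12}\equiv 27q^4\,\psi(q^9)^4\pmod{81}$ produces, after $q^3\mapsto q$, the stated term. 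Verifying that everything reassembles into the coefficients $35,18,-27,-27$ is then a short check.
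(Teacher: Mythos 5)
Your proof is correct and follows essentially the same route as the paper's: the exact identity is obtained from the $q^{3k+2}$ part of $(s^2-qst+q^2t^2)^4$ rewritten in powers of $s^3+q^3t^3$ (your completing identity $10x^2-16xy+y^2=10(x+y)^2-36(x+y)y+27y^2$ is the same manipulation the paper performs via $A(q)^3\psi(q^3)=\psi(q)^4-q\psi(q^3)^4$), and the congruence is obtained by dissecting the three resulting terms modulo $81$, $9$ and $3$ respectively through (\ref{sttwo}), exactly as the paper does. Your computational shortcuts --- the $(W^3)^4$ argument with $U_0\equiv P^2 \pmod{9}$ for the middle term in place of the paper's direct expansion of $W^{12}$, the reduction $W^{16}\equiv P^{10}W \pmod{3}$ for the third term, and recovering $-27q\,\psi(q^3)^4$ by splitting $90\equiv 27-18\pmod{81}$ rather than reading it off the third term directly --- are equivalent bookkeeping for steps the paper carries out explicitly, and all of them check out.
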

\begin{proof}
Let us go back to the proof of Theorem \ref{pod4mod9alpha}. By (\ref{mod272}) and (\ref{mod273}),  if we extract all the terms of the form $q^{3n+2}$  in the expansion of $\psi {{(q)}^{-4}}$, then divide by  $q^2$ and replace $q^3$ by $q$, we obtain
\[\sum\limits_{n=0}^{\infty }{{{(-1)}^{n}}\mathrm{pod}_{-4}(3n+2){{q}^{n}}}=\frac{\psi {{({{q}^{3}})}^{4}}}{\psi {{(q)}^{16}}}\left(10A{{(q)}^{6}}\psi {{({{q}^{3}})}^{2}}-16qA{{(q)}^{3}}\psi {{({{q}^{3}})}^{5}}+{{q}^{2}}\psi {{({{q}^{3}})}^{8}}\right).\]
By Lemma \ref{identity1}, we have  $A{{(q)}^{3}}\psi ({{q}^{3}})=\psi {{(q)}^{4}}-q\psi {{({{q}^{3}})}^{4}}$. Substituting this formula into the identity above, after simple manipulations, we proved the first identity.

Now we turn to the second congruence identity.
If we extract all the terms of the form ${{q}^{3k+1}}$  in the expansion of ${{({{s}^{2}}-qst+{{q}^{2}}{{t}^{2}})}^{8}}$, we get
\begin{displaymath}
\begin{split}
  & \quad -8q{{s}^{15}}t+266{{q}^{4}}{{s}^{12}}{{t}^{4}}-1016{{q}^{7}}{{s}^{9}}{{t}^{7}}+784{{q}^{10}}{{s}^{6}}{{t}^{10}}-112{{q}^{13}}{{s}^{3}}{{t}^{13}}+{{q}^{16}}{{t}^{16}} \\
 & \equiv qt{{({{s}^{3}}+{{q}^{3}}{{t}^{3}})}^{3}}(-8{{s}^{6}}-34{{s}^{3}}{{q}^{3}}{{t}^{3}}+{{q}^{6}}{{t}^{6}}) \pmod{81}.
\end{split}
\end{displaymath}

By (\ref{stpsi8}),  extracting all the terms of the form   ${{q}^{3k+1}}$  in the expansion of $\frac{\psi {{({{q}^{3}})}^{4}}}{\psi {{(q)}^{8}}}$ and doing reduction modulo $81$, we obtain
\begin{equation} \label{3k1mod81}
\begin{split}
  & q\frac{\psi {{({{q}^{9}})}^{9}}}{\psi {{({{q}^{3}})}^{28}}}\cdot \Big(\frac{\psi {{({{q}^{3}})}^{4}}}{\psi {{({{q}^{9}})}}}\Big)^{3}\Big(-8({{s}^{3}}+{{q}^{3}}{{t}^{3}})^{2}-18{{q}^{3}}{{t}^{3}}({{s}^{3}}+{{q}^{3}}{{t}^{3}})+27{{q}^{6}}{{t}^{6}}\Big) \\
 & \equiv q\frac{\psi {{({{q}^{9}})}^{6}}}{\psi {{({{q}^{3}})}^{16}}}\Big(-8\frac{\psi {{({{q}^{3}})}^{8}}}{{\psi ({{q}^{9}})}^{2}}-18{{q}^{3}}\psi {{({{q}^{9}})}^{3}}\frac{\psi {{({{q}^{3}})}^{4}}}{\psi ({{q}^{9}})}+27{{q}^{6}}\psi {{({{q}^{9}})}^{6}}\Big) \\
 & \equiv -8q\frac{\psi {{({{q}^{9}})}^{4}}}{\psi {{({{q}^{3}})}^{8}}}-18{{q}^{4}}\frac{\psi {{({{q}^{9}})}^{8}}}{\psi {{({{q}^{3}})}^{12}}}+27{{q}^{7}}\frac{\psi {{({{q}^{9}})}^{12}}}{\psi {{({{q}^{3}})}^{16}}} \pmod{81}.
\end{split}
\end{equation}
If we denote by $F(q)$ the sum of all the terms of the form ${{q}^{3k+1}}$ in the expansion of $10\frac{\psi {{({{q}^{3}})}^{4}}}{\psi {{(q)}^{8}}}$, we get
\begin{equation}\label{10term1}
\begin{split}
  F(q) & \equiv -80q\frac{\psi {{({{q}^{9}})}^{4}}}{\psi {{({{q}^{3}})}^{8}}}-180{{q}^{4}}\frac{\psi {{({{q}^{9}})}^{8}}}{\psi {{({{q}^{3}})}^{12}}}+270{{q}^{7}}\frac{\psi {{({{q}^{9}})}^{12}}}{\psi {{({{q}^{3}})}^{16}}} \\
 & \equiv q\frac{\psi {{({{q}^{9}})}^{4}}}{\psi {{({{q}^{3}})}^{8}}}-18{{q}^{4}}\frac{\psi {{({{q}^{9}})}^{8}}}{\psi {{({{q}^{3}})}^{12}}}+27{{q}^{7}}\frac{\psi {{({{q}^{9}})}^{12}}}{\psi {{({{q}^{3}})}^{16}}} \pmod{81}.
\end{split}
\end{equation}

Similarly, by (\ref{sttwo}) we have
\begin{equation}\label{lateruse}
\frac{\psi {{({{q}^{3}})}^{8}}}{\psi {{(q)}^{12}}}=\frac{\psi {{({{q}^{9}})}^{12}}}{\psi {{({{q}^{3}})}^{40}}}{{({{s}^{2}}-qst+{{q}^{2}}{{t}^{2}})}^{12}}.
\end{equation}
If we extract all the terms of the form  ${{q}^{3k}}$ in the expansion of ${{({{s}^{2}}-qst+{{q}^{2}}{{t}^{2}})}^{12}}$, we obtain
\begin{equation}\label{st12mod9}
\begin{split}
  & {{s}^{24}}-352{{s}^{21}}{{q}^{3}}{{t}^{3}}+8074{{s}^{18}}{{q}^{6}}{{t}^{6}}-43252{{s}^{15}}{{q}^{9}}{{t}^{9}}  +73789{{s}^{12}}{{q}^{12}}{{t}^{12}}-43252{{s}^{9}}{{q}^{15}}{{t}^{15}}\\
  &\quad +8074{{s}^{6}}{{q}^{18}}{{t}^{18}}-352{{s}^{3}}{{q}^{21}}{{t}^{21}}+{{q}^{24}}{{t}^{24}} \\
 & \equiv {{({{s}^{3}}+{{q}^{3}}{{t}^{3}})}^{8}} \pmod{9}.
\end{split}
\end{equation}

Now if we denote by $G(q)$ the sum of all the terms of the form  ${{q}^{3k+1}}$ in the expansion of $-36q\frac{\psi {{({{q}^{3}})}^{8}}}{\psi {{(q)}^{12}}}$, we get
\begin{equation}\label{term2}
G(q) \equiv -36q \frac{\psi {{({{q}^{9}})}^{12}}}{\psi {{({{q}^{3}})}^{40}}}{{({{s}^{3}}+{{q}^{3}}{{t}^{3}})}^{8}}
\equiv -36q\frac{\psi {{({{q}^{9}})}^{4}}}{\psi {{({{q}^{3}})}^{8}}} \pmod{81}.
\end{equation}

 In the same way,  since  $\psi {{(q)}^{3}}\equiv \psi ({{q}^{3}})$  (mod $3$), by (\ref{sttwo})  we have
\[ 27{{q}^{2}}\frac{\psi {{({{q}^{3}})}^{12}}}{\psi {{(q)}^{16}}}\equiv 27{{q}^{2}}\frac{\psi {{({{q}^{3}})}^{7}}}{\psi (q)}
  =27{{q}^{2}}\psi ({{q}^{9}})\psi {{({{q}^{3}})}^{3}}(s^2-qst+q^2t^2) \pmod{81}.\]

  If we denote by $H(q)$ the sum of all the terms of the form ${{q}^{3k+1}}$  in the expansion of $27{{q}^{2}}\frac{\psi {{({{q}^{3}})}^{12}}}{\psi {{(q)}^{16}}}$, then $H(q) \equiv 27{{q}^{4}}\psi {{({{q}^{9}})}^{3}}\psi {{({{q}^{3}})}^{3}}$ (mod 81). Together with (\ref{10term1}),  (\ref{term2})  and the first identity in this lemma, we obtain
\begin{displaymath}
\begin{split}
  & \quad \sum\limits_{n=0}^{\infty }{{{(-1)}^{3n+1}}\mathrm{pod}_{-4}\big(3(3n+1)+2\big){{q}^{3n+1}}} \\
  & =F(q)+G(q)+H(q) \\
 & \equiv -35q\frac{\psi {{({{q}^{9}})}^{4}}}{\psi {{({{q}^{3}})}^{8}}}-18{{q}^{4}}\frac{\psi {{({{q}^{9}})}^{8}}}{\psi {{({{q}^{3}})}^{12}}}+27{{q}^{7}}\frac{\psi {{({{q}^{9}})}^{12}}}{\psi {{({{q}^{3}})}^{16}}}+27{{q}^{4}}\psi {{({{q}^{9}})}^{3}}\psi {{({{q}^{3}})}^{3}} \pmod{81}.
\end{split}
\end{displaymath}
Dividing both sides by  $-q$ and replacing ${{q}^{3}}$ by  $q$,  note that $\psi {{(q)}^{3}}\equiv \psi ({{q}^{3}})$ (mod $3$), we proved the second congruence identity.
\end{proof}

Now we are ready to prove Theorem \ref{interpod4}.
\begin{proof}[Proof of Theorem  \ref{interpod4}]
By Lemma \ref{pod4mod27}, we have
\begin{equation}  \label{ker}
\begin{split}
 &\sum\limits_{n=0}^{\infty }{{{(-1)}^{n}}\big(\mathrm{pod}_{-4}(3n+2)+\mathrm{pod}_{-4}(9n+5)\big){{q}^{n}}} \\
 & \equiv 9\Big(5\frac{\psi {{({{q}^{3}})}^{4}}}{\psi {{(q)}^{8}}}-2q\frac{\psi {{({{q}^{3}})}^{8}}}{\psi {{(q)}^{12}}}-3q\psi {{({{q}^{3}})}^{4}}\Big) \pmod{81}.
\end{split}
\end{equation}
The first congruence follows immediately.

Moreover, we have
\begin{displaymath}
\begin{split}
&\quad \sum\limits_{n=0}^{\infty }{{{(-1)}^{n}}\big(\mathrm{pod}_{-4}(3n+2)+\mathrm{pod}_{-4}(9n+5)\big){{q}^{n}}} \\
 & \equiv 18\Big(\frac{\psi {{({{q}^{3}})}^{4}}}{\psi {{(q)}^{8}}}-q\frac{\psi {{({{q}^{3}})}^{8}}}{\psi {{(q)}^{12}}}\Big) \\  \nonumber
 & \equiv 18\big(\psi ({{q}^{3}})\psi (q)-q\psi {{({{q}^{3}})}^{4}}\big) \pmod{27}.
\end{split}
\end{displaymath}
Since $\psi (q)=A({{q}^{3}})+q\psi ({{q}^{9}})$,  we know the terms of the form  ${{q}^{3k+2}}$ vanish in the right hand side of the identity above. Hence
\[\mathrm{pod}_{-4}(9n+8)+\mathrm{pod}_{-4}(27n+23)\equiv 0 \pmod{27}.\]
We proved the third congruence.

  For the second congruence, we need more arguments.

 By (\ref{st12mod9}), the sum of all the terms of the form ${{q}^{3k}}$ in the expansion of ${{({{s}^{2}}-qst+{{q}^{2}}{{t}^{2}})}^{12}}$ is congruent to   ${{({{s}^{3}}+{{q}^{3}}{{t}^{3}})}^{8}}$ modulo 9. If we denote by $I(q)$ the sum of all the terms of the form  ${{q}^{3k+1}}$ in the expansion of    $-2q\frac{\psi {{({{q}^{3}})}^{18}}}{\psi {{(q)}^{12}}}$, then by (\ref{lateruse}) we have
\[I(q) \equiv -2q{{\Big(\frac{\psi {{({{q}^{3}})}^{4}}}{\psi ({{q}^{9}})}\Big)}^{8}}\frac{\psi {{({{q}^{9}})}^{12}}}{\psi {{({{q}^{3}})}^{40}}}\equiv -2q\frac{\psi {{({{q}^{9}})}^{4}}}{\psi {{({{q}^{3}})}^{8}}} \pmod{9}.\]

By (\ref{3k1mod81}), we know the sum of all the terms of the form  ${{q}^{3k+1}}$  in the expansion of $5\frac{\psi {{({{q}^{3}})}^{4}}}{\psi {{(q)}^{8}}}$ is congruent to  $-4q\frac{\psi {{({{q}^{9}})}^{4}}}{\psi {{({{q}^{3}})}^{8}}}$ modulo 9. Hence if we extract all the terms of the form ${{q}^{3k+1}}$ in both sides of  (\ref{ker}), we obtain
\begin{displaymath}
\begin{split}
  & \sum\limits_{n=0}^{\infty }{{{(-1)}^{3n+1}}\big(\mathrm{pod}_{-4}(9n+5)+\mathrm{pod}_{-4}(27n+14)\big){{q}^{3n+1}}} \\ \nonumber
 & \equiv 27\Big(-2q\frac{\psi {{({{q}^{9}})}^{4}}}{\psi {{({{q}^{3}})}^{8}}}-q\psi {{({{q}^{3}})}^{4}}\Big) \equiv -81\psi {{(q^{3})}^{4}} \pmod{81}.
\end{split}
\end{displaymath}
The second congruence follows.
\end{proof}
\begin{rem}
The modulus in Theorem  \ref{interpod4} cannot be replaced by high powers of 3. Because we have
\begin{displaymath}
\begin{split}
\mathrm{pod}_{-4}(5)+\mathrm{pod}_{-4}(2)&={{3}^{2}}\times 2\times 7, \\ \nonumber
\mathrm{pod}_{-4}(14)+\mathrm{pod}_{-4}(5)&={{3}^{4}}\times {{2}^{8}}, \\ \nonumber
\mathrm{pod}_{-4}(23)+\mathrm{pod}_{-4}(8)&={{3}^{3}}\times 19\times 2027.
\end{split}
\end{displaymath}
\end{rem}

\section{Congruences Modulo 2, 5 and 8}
\begin{thm}\label{ramaid}
We have
\[\sum\limits_{n=0}^{\infty }{\mathrm{pod}_{-4}(2n){{q}^{n}}}=\frac{({{q}^{2}};{{q}^{2}})_{\infty }^{10}}{(q;q)_{\infty }^{10}({{q}^{4}};{{q}^{4}})_{\infty }^{4}},\]
\[\sum\limits_{n=0}^{\infty }{\mathrm{pod}_{-4}(2n+1){{q}^{n}}}=4\frac{({{q}^{4}};{{q}^{4}})_{\infty }^{4}}{(q;q)_{\infty }^{6}({{q}^{2}};{{q}^{2}})_{\infty }^{2}}.\]
\end{thm}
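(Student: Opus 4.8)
The plan is to obtain both formulas by the standard even/odd splitting of a generating function. Writing $f(q)=\sum_{n\ge 0}\mathrm{pod}_{-4}(n)q^{n}=\psi(-q)^{-4}$ by \eqref{podk}, the even-indexed part is $\tfrac12\big(f(q)+f(-q)\big)$ and the odd-indexed part is $\tfrac12\big(f(q)-f(-q)\big)$. Since $\psi(-(-q))=\psi(q)$, we have $f(-q)=\psi(q)^{-4}$, so everything reduces to combining $\psi(-q)^{-4}$ and $\psi(q)^{-4}$, simplifying to a product, and then replacing $q^{2}$ by $q$ (after dividing off the leading $q$ in the odd case).

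The conceptual tool I would use is the classical factorization $\psi(\pm q)^{2}=\varphi(\pm q)\,\psi(q^{2})$, where $\varphi(q)=\sum_{n=-\infty}^{\infty}q^{n^{2}}$ is the companion theta function; this is immediate from the product representations $\psi(q)=(q^{2};q^{2})_{\infty}^{2}/(q;q)_{\infty}$ and $\varphi(q)=(q^{2};q^{2})_{\infty}^{5}/\big((q;q)_{\infty}^{2}(q^{4};q^{4})_{\infty}^{2}\big)$. With it, $\psi(-q)^{-4}=\big(\varphi(-q)^{2}\psi(q^{2})^{2}\big)^{-1}$ and $\psi(q)^{-4}=\big(\varphi(q)^{2}\psi(q^{2})^{2}\big)^{-1}$, whence
\begin{align*}
f(q)+f(-q)&=\frac{1}{\psi(q^{2})^{2}}\cdot\frac{\varphi(q)^{2}+\varphi(-q)^{2}}{\big(\varphi(q)\varphi(-q)\big)^{2}},\\
f(q)-f(-q)&=\frac{1}{\psi(q^{2})^{2}}\cdot\frac{\varphi(q)^{2}-\varphi(-q)^{2}}{\big(\varphi(q)\varphi(-q)\big)^{2}}.
\end{align*}

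Next I would invoke the three standard identities $\varphi(q)^{2}+\varphi(-q)^{2}=2\varphi(q^{2})^{2}$, $\varphi(q)^{2}-\varphi(-q)^{2}=8q\,\psi(q^{4})^{2}$, and $\varphi(q)\varphi(-q)=\varphi(-q^{2})^{2}$ (all found in Berndt's account of Ramanujan's theta functions, cf.\ \cite{Bruce}). These collapse the two displays to
\begin{align*}
f(q)+f(-q)&=\frac{2\,\varphi(q^{2})^{2}}{\psi(q^{2})^{2}\,\varphi(-q^{2})^{4}},\\
f(q)-f(-q)&=\frac{8q\,\psi(q^{4})^{2}}{\psi(q^{2})^{2}\,\varphi(-q^{2})^{4}}.
\end{align*}
Converting each theta quotient back into an infinite product via the representations of $\varphi$ and $\psi$ and simplifying gives $f(q)+f(-q)=2(q^{4};q^{4})_{\infty}^{10}/\big((q^{2};q^{2})_{\infty}^{10}(q^{8};q^{8})_{\infty}^{4}\big)$ and $f(q)-f(-q)=8q(q^{8};q^{8})_{\infty}^{4}/\big((q^{2};q^{2})_{\infty}^{6}(q^{4};q^{4})_{\infty}^{2}\big)$. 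Halving and replacing $q^{2}$ by $q$ (in the odd case first dividing by $q$) yields exactly the two claimed identities.

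I expect the only genuine work to be the bookkeeping in the last step, where the exponents of $(q^{2};q^{2})_{\infty}$, $(q^{4};q^{4})_{\infty}$ and $(q^{8};q^{8})_{\infty}$ must be tracked carefully; the conceptual content is entirely carried by the factorization $\psi(\pm q)^{2}=\varphi(\pm q)\psi(q^{2})$ together with the three classical $\varphi$-identities, so there is no real obstacle once those are in place. As a sanity check I would verify the leading coefficients, namely $\mathrm{pod}_{-4}(0)=1$ against the constant term of the first product and $\mathrm{pod}_{-4}(1)=4$ against the explicit factor $4$ in the second.
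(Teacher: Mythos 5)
Your proof is correct, but it takes a genuinely different route from the paper. The paper's proof is essentially a one-step citation: it rewrites the generating function as $(q^{2};q^{2})_{\infty}^{4}/\big((q;q)_{\infty}^{4}(q^{4};q^{4})_{\infty}^{4}\big)$ and then invokes Toh's ready-made $2$-dissection (Corollary 2.4 of \cite{pee}) of $(q^{2};q^{2})_{\infty}^{14}/\big((q;q)_{\infty}^{4}(q^{4};q^{4})_{\infty}^{4}\big)$, from which both identities drop out by reading off even and odd parts. You instead re-derive that dissection from scratch: the even/odd splitting $\tfrac12\big(f(q)\pm f(-q)\big)$, the factorization $\psi(\pm q)^{2}=\varphi(\pm q)\psi(q^{2})$, and the three classical Entry-25 identities $\varphi(q)^{2}+\varphi(-q)^{2}=2\varphi(q^{2})^{2}$, $\varphi(q)^{2}-\varphi(-q)^{2}=8q\,\psi(q^{4})^{2}$, $\varphi(q)\varphi(-q)=\varphi(-q^{2})^{2}$ from \cite{Bruce}. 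I checked the product bookkeeping: with $\varphi(q^{2})=(q^{4};q^{4})_{\infty}^{5}/\big((q^{2};q^{2})_{\infty}^{2}(q^{8};q^{8})_{\infty}^{2}\big)$, $\varphi(-q^{2})=(q^{2};q^{2})_{\infty}^{2}/(q^{4};q^{4})_{\infty}$ and $\psi(q^{2})=(q^{4};q^{4})_{\infty}^{2}/(q^{2};q^{2})_{\infty}$, your two displays do collapse to exactly the dissection the paper quotes from Toh, so the final identities follow as you say. What each approach buys: the paper's is shorter but leans on a comparatively specialized recent reference, while yours is self-contained modulo only the classical Ramanujan theta identities, and in effect gives an independent proof of the special case of Toh's dissection that is needed here.
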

\begin{proof}
By (\ref{podk}) we have
\[\sum\limits_{n=0}^{\infty }{\mathrm{pod}_{-4}(n){{q}^{n}}}=\frac{(-q;{{q}^{2}})_{\infty }^{4}}{({{q}^{2}};{{q}^{2}})_{\infty }^{4}}=\frac{({{q}^{2}};{{q}^{4}})_{\infty }^{4}}{(q;q)_{\infty }^{4}}=\frac{({{q}^{2}};{{q}^{2}})_{\infty }^{4}}{(q;q)_{\infty }^{4}({{q}^{4}};{{q}^{4}})_{\infty }^{4}}.\]
From \cite[Corollary 2.4]{pee} we know
\[\frac{({{q}^{2}};{{q}^{2}})_{\infty }^{14}}{(q;q)_{\infty }^{4}({{q}^{4}};{{q}^{4}})_{\infty }^{4}}=\frac{({{q}^{4}};{{q}^{4}})_{\infty }^{10}}{({{q}^{8}};{{q}^{8}})_{\infty }^{4}}+4q\frac{({{q}^{2}};{{q}^{2}})_{\infty }^{4}({{q}^{8}};{{q}^{8}})_{\infty }^{4}}{({{q}^{4}};{{q}^{4}})_{\infty }^{2}}.\]
Hence
\[\sum\limits_{n=0}^{\infty }{\mathrm{pod}_{-4}(n){{q}^{n}}}=\frac{({{q}^{2}};{{q}^{2}})_{\infty }^{4}}{(q;q)_{\infty }^{4}({{q}^{4}};{{q}^{4}})_{\infty }^{4}}=\frac{({{q}^{4}};{{q}^{4}})_{\infty }^{10}}{({{q}^{2}};{{q}^{2}})_{\infty }^{10}({{q}^{8}};{{q}^{8}})_{\infty }^{4}}+4q\frac{({{q}^{8}};{{q}^{8}})_{\infty }^{4}}{({{q}^{2}};{{q}^{2}})_{\infty }^{6}({{q}^{4}};{{q}^{4}})_{\infty }^{2}}.\]
From which the theorem follows.
\end{proof}
\begin{thm}\label{pod4mod2}
For any integer $n\ge 0$,  we have\\
(1)  $\mathrm{pod}_{-4}(4n+2)\equiv 0$  \text{\rm{(mod $2$)}}.  \\
(2)  If $n=k(k+1)$ for some integer $k$,  then $\mathrm{pod}_{-4}(2n+1)\equiv 4$ \text{\rm{(mod $8$)}}. Otherwise $\mathrm{pod}_{-4}(2n+1)\equiv 0$ \text{\rm{(mod $8$)}}.
\end{thm}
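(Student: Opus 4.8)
The plan is to derive both parts directly from Theorem \ref{ramaid} by reducing the relevant infinite products modulo $2$ with the help of Lemma \ref{basic} taken with $p=2$, which yields $(q;q)_\infty^2\equiv(q^2;q^2)_\infty\pmod 2$ (and, after $q\mapsto q^2$, also $(q^2;q^2)_\infty^2\equiv(q^4;q^4)_\infty\pmod 2$).

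For part (1), I would work with the even generating function $\sum_{m\ge0}\mathrm{pod}_{-4}(2m)q^m=\frac{(q^2;q^2)_\infty^{10}}{(q;q)_\infty^{10}(q^4;q^4)_\infty^4}$. Writing $(q;q)_\infty^{10}=\big((q;q)_\infty^2\big)^5$ and replacing it by $(q^2;q^2)_\infty^5$ modulo $2$ collapses the right-hand side to $\frac{(q^2;q^2)_\infty^{5}}{(q^4;q^4)_\infty^4}\pmod 2$, which is a power series in $q^2$. Hence every odd-indexed coefficient of $\sum_m\mathrm{pod}_{-4}(2m)q^m$ is even; since $4n+2=2(2n+1)$, taking the coefficient of $q^{2n+1}$ gives $\mathrm{pod}_{-4}(4n+2)\equiv0\pmod 2$.

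For part (2), the explicit factor $4$ in $\sum_{n\ge0}\mathrm{pod}_{-4}(2n+1)q^n=4\,\frac{(q^4;q^4)_\infty^4}{(q;q)_\infty^6(q^2;q^2)_\infty^2}$ means it suffices to compute the remaining quotient modulo $2$ and multiply by $4$ to obtain a congruence modulo $8$. Reducing $(q;q)_\infty^6\equiv(q^2;q^2)_\infty^3$ and $(q^2;q^2)_\infty^2\equiv(q^4;q^4)_\infty$ modulo $2$ turns the quotient into $\frac{(q^4;q^4)_\infty^{3}}{(q^2;q^2)_\infty^{3}}\pmod 2$. The decisive step is to recognize this as a theta function: from $\psi(q)=\frac{(q^2;q^2)_\infty^2}{(q;q)_\infty}$ together with $(q;q)_\infty^2\equiv(q^2;q^2)_\infty$ one gets $\frac{(q^2;q^2)_\infty^3}{(q;q)_\infty^3}=\psi(q)\cdot\frac{(q^2;q^2)_\infty}{(q;q)_\infty^2}\equiv\psi(q)\pmod2$, and substituting $q\mapsto q^2$ yields $\frac{(q^4;q^4)_\infty^{3}}{(q^2;q^2)_\infty^{3}}\equiv\psi(q^2)\pmod2$.

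Combining these gives $\sum_{n\ge0}\mathrm{pod}_{-4}(2n+1)q^n\equiv4\psi(q^2)=4\sum_{k\ge0}q^{k(k+1)}\pmod 8$. Since the exponents $k(k+1)$ with $k\ge0$ are distinct, each appearing with coefficient $1$, and since $\{k(k+1):k\ge 0\}$ already exhausts all values $k(k+1)$ for integer $k$, reading off the coefficient of $q^n$ shows that $\mathrm{pod}_{-4}(2n+1)\equiv4\pmod8$ precisely when $n=k(k+1)$ for some integer $k$, and $\mathrm{pod}_{-4}(2n+1)\equiv0\pmod8$ otherwise. Given Theorem \ref{ramaid}, the computations are elementary; the only genuine obstacle is the identification of the mod-$2$ quotient with $\psi(q^2)$, after which the dichotomy between the residues $4$ and $0$ modulo $8$ follows immediately from the support of $\psi(q^2)$.
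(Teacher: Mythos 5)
Your proposal is correct and is essentially the paper's own proof: both parts rest on Theorem \ref{ramaid} together with the mod-2 reduction $(q;q)_\infty^2\equiv(q^2;q^2)_\infty \pmod{2}$ from Lemma \ref{basic}, the observation that the even part collapses to a series in $q^2$, and the factor-$4$ upgrade of a mod-$2$ congruence to a mod-$8$ one. The only cosmetic difference is the final step of part (2): the paper invokes Jacobi's identity $(q^2;q^2)_\infty^3=\sum_{k\ge0}(-1)^k(2k+1)q^{k(k+1)}$ to land on $\sum_{k\ge0}q^{k(k+1)}$ modulo $2$, whereas you recognize the reduced product as $\psi(q^2)\pmod{2}$ and read off its theta series; these are equivalent, since $\psi(q^2)\equiv(q^2;q^2)_\infty^3\pmod{2}$.
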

\begin{proof}
(1) By Theorem \ref{ramaid} and Lemma \ref{basic} we have
\begin{displaymath}
\sum\limits_{n=0}^{\infty }{\mathrm{pod}_{-4}(2n){{q}^{n}}}  =\frac{1}{(q;q)_{\infty }^{10}}\cdot \frac{({{q}^{2}};{{q}^{2}})_{\infty }^{10}}{({{q}^{4}};{{q}^{4}})_{\infty }^{4}} \equiv \frac{1}{({{q}^{2}};{{q}^{2}})_{\infty }^{3}} \pmod{2}.
\end{displaymath}
Note that the terms of the form ${{q}^{2n+1}}$ do not appear in the right hand side of the above equation, we deduce that  $\mathrm{pod}_{-4}(4n+2)\equiv 0$ (mod $2$).

(2)  Again by Theorem \ref{ramaid} and Lemma \ref{basic}, we have
\begin{equation}\label{pod4odd}
\sum\limits_{n=0}^{\infty }{\mathrm{pod}_{-4}(2n+1){{q}^{n}}}=\frac{4}{(q;q)_{\infty }^{6}}\cdot \frac{({{q}^{4}};{{q}^{4}})_{\infty }^{4}}{({{q}^{2}};{{q}^{2}})_{\infty }^{2}} \equiv 4({{q}^{2}};{{q}^{2}})_{\infty }^{3} \pmod{8}.
\end{equation}

By Jacobi's identity (Cf. \cite[p.14]{Bruce}),  we have
\[({{q}^{2}};{{q}^{2}})_{\infty }^{3}=\sum\limits_{k=0}^{\infty }{{{(-1)}^{k}}(2k+1){{q}^{k(k+1)}}}\equiv \sum\limits_{k=0}^{\infty }{{{q}^{k(k+1)}}} \pmod{2}.\]
Hence
\[\sum\limits_{n=0}^{\infty }{\mathrm{pod}_{-4}(2n+1){{q}^{n}}}\equiv 4\sum\limits_{k=0}^{\infty }{{{q}^{k(k+1)}}} \pmod{8}.\]
Comparing the coefficients of  ${{q}^{n}}$ on both sides, we complete our proof.
\end{proof}

\begin{thm}
Let $p\ge 3$ be a prime and $m\equiv -1$ \text{\rm{(mod $p$)}}. Suppose  $r$ is an integer such that $8r+1$ is a quadratic non-residue modulo $p$, then we have \[\mathrm{pod}_{-m}(pn+r)\equiv 0  \pmod{p}.\]
\end{thm}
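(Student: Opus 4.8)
The plan is to reduce the generating function $1/\psi{(-q)}^m$ modulo $p$ and then isolate exactly which residue classes modulo $p$ can occur. Since $m\equiv -1\pmod p$ and $m\ge 1$, I would write $m=pt-1$ with $t=(m+1)/p$ a positive integer, and split
\[\frac{1}{\psi{(-q)}^m}=\frac{\psi(-q)}{\psi{(-q)}^{pt}}=\frac{\psi(-q)}{{\big(\psi{(-q)}^{p}\big)}^{t}}.\]
Because $p\ge 3$ is odd we have ${(-q)}^{p}=-q^{p}$, so the second relation of Lemma \ref{basic} (applied with $q$ replaced by $-q$) gives $\psi{(-q)}^{p}\equiv \psi(-q^{p})\pmod p$. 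As every series in sight has constant term $1$ and integer coefficients, this congruence passes to reciprocals, yielding
\[\sum_{n=0}^{\infty}\mathrm{pod}_{-m}(n)q^{n}=\frac{1}{\psi{(-q)}^{m}}\equiv \frac{\psi(-q)}{\psi{(-q^{p})}^{t}}\pmod p.\]

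The structural heart of the argument is that $\psi{(-q^{p})}^{t}$, and hence its reciprocal, is a power series in $q^{p}$ alone. Therefore any exponent contributing to the class $q^{pn+r}$ on the right must already be present in the numerator $\psi(-q)$. Writing $\psi(-q)=\sum_{k\ge 0}{(-1)}^{k(k+1)/2}q^{k(k+1)/2}$, the exponents occurring in $\psi(-q)$ are precisely the triangular numbers $T_{k}=k(k+1)/2$. Consequently $\mathrm{pod}_{-m}(pn+r)\equiv 0\pmod p$ will follow as soon as I show that no triangular number lies in the residue class $r$ modulo $p$.

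This is where the hypothesis on $8r+1$ enters, and it is the crux of the proof. The elementary identity $8T_{k}+1=4k(k+1)+1={(2k+1)}^{2}$ shows that $8T_{k}+1$ is always a perfect square. Hence if some triangular number satisfied $T_{k}\equiv r\pmod p$, then ${(2k+1)}^{2}=8T_{k}+1\equiv 8r+1\pmod p$ would exhibit $8r+1$ as a square modulo $p$, contradicting the assumption that $8r+1$ is a quadratic non-residue (which in particular forces $8r+1\not\equiv 0$). Thus no triangular number is congruent to $r$, the coefficient of $q^{pn+r}$ on the right-hand side vanishes modulo $p$, and the claim follows. I expect no genuine obstacle beyond bookkeeping: the only delicate points are the legitimacy of passing to reciprocals modulo $p$ (valid since all series have unit constant term) and the sign relation ${(-q)}^{p}=-q^{p}$, which is exactly what the oddness assumption $p\ge 3$ supplies.
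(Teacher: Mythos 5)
Your proof is correct and follows essentially the same route as the paper: both reduce $1/\psi^{m}$ modulo $p$ via $\psi^{p}\equiv\psi(q^{p})$ from Lemma \ref{basic}, observe that the remaining factor is a series in $q^{p}$ times a single $\psi$, and kill the residue class $r$ using $8\cdot\frac{k(k+1)}{2}+1=(2k+1)^{2}$ together with the non-residue hypothesis. The only cosmetic difference is that the paper substitutes $q\mapsto -q$ so as to work with $\psi(q)$, whereas you carry the signs in $\psi(-q)$ throughout; your extra care about inverting power series modulo $p$ and about $8r+1\not\equiv 0$ is sound but not a different argument.
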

\begin{proof}
By Lemma \ref{basic}, we have
\[\sum\limits_{n=0}^{\infty }{\mathrm{pod}_{-m}(n){{(-q)}^{n}}}=\frac{1}{\psi {{(q)}^{m}}}\equiv {{\Big(\frac{1}{\psi ({{q}^{p}})}\Big)}^{\frac{m+1}{p}}}\psi (q) \pmod{p}.\]
 Note that $\psi (q)=\sum\limits_{n=0}^{\infty }{{{q}^{n(n+1)/2}}}$,  and $r\equiv n(n+1)/2$ (mod $p$) is equivalent to $8r+1\equiv {{(2n+1)}^{2}}$ (mod $p$). If  $8r+1$ is a quadratic non-residue modulo $p$,  then ${{q}^{pn+r}}$ vanishes in the expansion of the right hand side. We complete our proof.
\end{proof}
In particular, let $p=5$ and $m=4$. We have
\begin{cor}\label{cormod5}
For any integer $n \ge 0$,  we have
\[\mathrm{pod}_{-4}(5n+2)\equiv \mathrm{pod}_{-4}(5n+4)\equiv 0 \pmod{5}.\]
\end{cor}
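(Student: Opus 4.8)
The plan is to obtain the corollary as a direct specialization of the preceding theorem, taking $p = 5$ and $m = 4$. First I would verify that the hypothesis $m \equiv -1 \pmod{p}$ holds in this case: indeed $4 \equiv -1 \pmod{5}$, so the theorem is applicable to $\mathrm{pod}_{-4}$ with the prime $5$, and it will force $\mathrm{pod}_{-4}(5n+r) \equiv 0 \pmod{5}$ for every $r$ whose shifted double $8r+1$ is a quadratic non-residue modulo $5$.

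Next I would determine which residue classes $r$ modulo $5$ make $8r+1$ a quadratic non-residue. Computing the squares modulo $5$ gives $0,1,4$ as the quadratic residues, so the non-residues are precisely $2$ and $3$. It then suffices to test the two relevant values $r = 2$ and $r = 4$. For $r = 2$ one has $8r+1 = 17 \equiv 2 \pmod{5}$, and for $r = 4$ one has $8r+1 = 33 \equiv 3 \pmod{5}$; since both $2$ and $3$ are quadratic non-residues modulo $5$, the hypothesis of the theorem is met in each case.

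Having checked the hypotheses for both choices of $r$, the theorem immediately yields $\mathrm{pod}_{-4}(5n+2) \equiv 0 \pmod{5}$ and $\mathrm{pod}_{-4}(5n+4) \equiv 0 \pmod{5}$ for all $n \ge 0$, which is exactly the asserted pair of congruences. There is essentially no obstacle beyond this short arithmetic verification: all of the substance is already carried by the theorem, whose mechanism is that $r \equiv n(n+1)/2 \pmod{p}$ is solvable if and only if $8r+1$ is a square modulo $p$, so that the corresponding coefficients of $\psi(q)$ vanish exactly when $8r+1$ is a non-residue. The only point requiring a sentence of care is confirming, as above, that $8r+1$ genuinely lands in the non-residue classes $\{2,3\}$ for the two target arithmetic progressions.
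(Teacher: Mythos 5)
Your proposal is correct and matches the paper's own derivation: the paper obtains this corollary precisely by specializing the preceding theorem to $p=5$, $m=4$, exactly as you do. Your explicit verification that $8\cdot 2+1\equiv 2$ and $8\cdot 4+1\equiv 3\pmod 5$ are quadratic non-residues is the only arithmetic content, and it is carried out correctly.
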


\begin{lem}\label{5disection}
We have
\[\psi (q)=A({{q}^{5}})+qB({{q}^{5}})+{{q}^{3}}\psi ({{q}^{25}}),\]
where
\[A(q)={{(-{{q}^{2}},-{{q}^{3}},{{q}^{5}};{{q}^{5}})}_{\infty }}, \, B(q)={{(-q,-{{q}^{4}},{{q}^{5}};{{q}^{5}})}_{\infty }}.\]
\end{lem}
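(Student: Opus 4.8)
The plan is to start from the series definition $\psi(q)=\sum_{n=0}^{\infty}q^{n(n+1)/2}$ and to sort its terms according to the residue of the index $n$ modulo $5$. The first step is to record the residues of the triangular numbers $T_n=n(n+1)/2$ modulo $5$: a direct check for $n\equiv 0,1,2,3,4\pmod 5$ gives $T_n\equiv 0,1,3,1,0\pmod 5$ respectively. Hence the exponents appearing in $\psi(q)$ fall only in the residue classes $0,1,3$ modulo $5$, which already explains why the three summands on the right-hand side are supported on $q^{5k}$, $q^{5k+1}$ and $q^{5k+3}$, with no $q^{5k+2}$ or $q^{5k+4}$ terms. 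This matches the claimed shape: $A(q^5)$ should collect the classes $n\equiv 0,4$, the term $qB(q^5)$ should collect $n\equiv 1,3$, and $q^{3}\psi(q^{25})$ should collect $n\equiv 2$.

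The second step disposes of the easy piece. For $n\equiv 2\pmod 5$ I would write $n=5m+2$ and compute $T_{5m+2}=25\,\frac{m(m+1)}{2}+3=25\,T_m+3$, so that $\sum_{n\equiv 2}q^{T_n}=q^{3}\sum_{m\ge 0}q^{25T_m}=q^{3}\psi(q^{25})$, which is exactly the last term. For the two remaining pieces I would substitute $n=5m$ and $n=5m+4$ into $T_n$ to produce $A(q^5)$, and $n=5m+1$, $n=5m+3$ to produce $qB(q^5)$, in each case factoring out the common power of $q^{5}$ (and the extra $q$ in the second case).

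The third and decisive step is to identify each of these two pieces as a Ramanujan theta function. Writing $f(a,b)=\sum_{n=-\infty}^{\infty}a^{n(n+1)/2}b^{n(n-1)/2}$, I would merge the two arithmetic progressions making up the residue-$0$ part into a single bilateral sum by the reindexing $m\mapsto -m-1$, which carries the $n\equiv 4$ progression onto the negative-index continuation of the $n\equiv 0$ progression; with $Q=q^{5}$ the outcome is $A(Q)=\sum_{m=-\infty}^{\infty}Q^{m(5m+1)/2}=f(Q^{2},Q^{3})$. The same maneuver turns the residue-$1$ part into $B(Q)=\sum_{m=-\infty}^{\infty}Q^{m(5m+3)/2}=f(Q,Q^{4})$. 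Finally I would invoke the Jacobi triple product $f(a,b)=(-a;ab)_{\infty}(-b;ab)_{\infty}(ab;ab)_{\infty}$ to evaluate $f(Q^{2},Q^{3})=(-Q^{2},-Q^{3},Q^{5};Q^{5})_{\infty}$ and $f(Q,Q^{4})=(-Q,-Q^{4},Q^{5};Q^{5})_{\infty}$, which are precisely the stated products for $A$ and $B$.

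The hard part will be this last step: correctly merging the two half-progressions into a single bilateral theta series (getting the substitution $m\mapsto -m-1$ and the index range exactly right), and then matching the parameters $(a,b)$ so that the Jacobi triple product delivers the claimed infinite products. By comparison the residue bookkeeping modulo $5$ and the identification of the $q^{3}\psi(q^{25})$ term are routine.
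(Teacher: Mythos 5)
Your proposal is correct and takes essentially the same route as the paper's proof: classify the triangular-number exponents $n(n+1)/2$ by the residue of $n$ modulo $5$, identify the $n\equiv 2$ class directly as $q^{3}\psi(q^{25})$, merge each remaining pair of progressions ($n\equiv 0,4$ and $n\equiv 1,3$) into a single bilateral series via the reindexing $m\mapsto -m-1$, and finish with the Jacobi triple product. The only cosmetic difference is that you phrase the final step through Ramanujan's $f(a,b)$ notation, whereas the paper applies the triple product directly to the bilateral sums $\sum_{n=-\infty}^{\infty}q^{(5n^{2}\pm n)/2}$ and $\sum_{n=-\infty}^{\infty}q^{(5n^{2}\pm 3n)/2}$.
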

\begin{proof}
Since $\psi (q)=\sum\limits_{n=0}^{\infty }{{{q}^{n(n+1)/2}}}$, and the residue of  $n(n+1)/2$ modulo 5 can only be  $0$, $1$ or $3$, we may assume that
\[\psi (q)=A({{q}^{5}})+qB({{q}^{5}})+{{q}^{3}}C({{q}^{5}}).\]
Because $n(n+1)/2\equiv 0$ (mod $5$) if and only if $n\equiv 0,4$ (mod $5$),  we have
\[A({{q}^{5}})=\sum\limits_{n=0}^{\infty }{{{q}^{5n(5n+1)/2}}}+\sum\limits_{n=0}^{\infty }{{{q}^{(5n+4)(5n+5)/2}}}.\]
Replacing ${{q}^{5}}$ by $q$, and replacing the index $n$ by $-n-1$ in the second summmation,  accordingly the summation range of $n$ starting form $-\infty $ and ends with $-1$,  combining the two sums together, and applying Jacobi's triple product identity, we obtain
\[A(q)=\sum\limits_{n=-\infty }^{\infty }{{{q}^{\frac{5}{2}{{n}^{2}}+\frac{1}{2}n}}}={{(-{{q}^{2}},-{{q}^{3}},{{q}^{5}};{{q}^{5}})}_{\infty }}.\]

Note that $n(n+1)/2 \equiv 1$ (mod $5$) if and only if $n\equiv 1,3$ (mod $5$),    by a similar way we can show that $B(q)={{(-q,-{{q}^{4}},{{q}^{5}};{{q}^{5}})}_{\infty }}$.

Finally,  since  $n(n+1)/2\equiv 3$ (mod  $5$) if and only if  $n\equiv 2$ (mod $5$), we have
\[{{q}^{3}}C({{q}^{5}})=\sum\limits_{n=0}^{\infty }{{{q}^{(5n+2)(5n+3)/2}}}={{q}^{3}}\psi ({{q}^{25}}).\]
The proof of this lemma is now complete.
\end{proof}

\begin{thm}
For any integer $n\ge 0$,  we have
\[\mathrm{pod}_{-4}(5n+3)\equiv {{(-1)}^{n+1}}\sigma (2n+1) \pmod{5}.\]
Moreover,  $\mathrm{pod}_{-4}(5n+3)\equiv 0$ \text{\rm{(mod $5$)}} if and only if one of the following conditions are satisfied.\\
(1) There exists prime $p\equiv 1$ \text{\rm{(mod $5$)}} such that ${{v}_{p}}(2n+1)\equiv 4$ \text{\rm{(mod $5$)}};\\
(2) There exists prime $p\equiv 2,3$ or $4$ \text{\rm{(mod $5$)}} such that ${{v}_{p}}(2n+1)\equiv 3$ \text{\rm{(mod $4$)}}.
\end{thm}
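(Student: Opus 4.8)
The plan is to mirror the derivation of Corollary \ref{sigma}, replacing the $3$-dissection used there by the $5$-dissection of Lemma \ref{5disection}. Starting from (\ref{podk}) we have $\sum_{n\ge 0}(-1)^n\mathrm{pod}_{-4}(n)q^n=\psi(q)^{-4}$, and since $\psi(q)^5\equiv\psi(q^5)\pmod 5$ by Lemma \ref{basic}, we may rewrite
\[
\frac{1}{\psi(q)^4}=\frac{\psi(q)}{\psi(q)^5}\equiv\frac{\psi(q)}{\psi(q^5)}\pmod 5 .
\]
First I would substitute the $5$-dissection $\psi(q)=A(q^5)+qB(q^5)+q^3\psi(q^{25})$ into the right-hand side.

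The key observation is that $1/\psi(q^5)$ is a power series in $q^5$, so in $\psi(q)/\psi(q^5)$ the three summands $A(q^5)/\psi(q^5)$, $qB(q^5)/\psi(q^5)$ and $q^3\psi(q^{25})/\psi(q^5)$ contribute only to exponents congruent to $0,1,3\pmod 5$, respectively. Hence the terms $q^{5n+3}$ arise solely from the last summand, giving
\[
\sum_{n\ge 0}(-1)^{5n+3}\mathrm{pod}_{-4}(5n+3)q^{5n+3}\equiv q^3\,\frac{\psi(q^{25})}{\psi(q^5)}\pmod 5 .
\]
Dividing by $q^3$ and replacing $q^5$ by $q$, and using $(-1)^{5n+3}=(-1)^{n+1}$, yields $\sum_{n\ge 0}(-1)^{n+1}\mathrm{pod}_{-4}(5n+3)q^n\equiv\psi(q^5)/\psi(q)\pmod 5$. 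Applying $\psi(q^5)\equiv\psi(q)^5$ once more collapses the right side to $\psi(q)^4=\sum_{n\ge 0}t_4(n)q^n$, and the identity $t_4(n)=\sigma(2n+1)$ (Theorem 3.6.3 in \cite{Bruce}), already invoked in Corollary \ref{sigma}, gives the first assertion after comparing coefficients of $q^n$.

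For the divisibility criterion I would reuse the multiplicative factorization $\sigma(2n+1)=\prod_{p\mid 2n+1}\big(1+p+\cdots+p^{v_p(2n+1)}\big)$, so that $5\mid\sigma(2n+1)$ exactly when some local factor vanishes modulo $5$. The analysis then proceeds as in Corollary \ref{sigma}, but now split according to the order of $p$ modulo $5$: for $p=5$ the factor is $\equiv 1$; for $p\equiv 1$ it is $\equiv v_p(2n+1)+1$, forcing $v_p(2n+1)\equiv 4\pmod 5$; and for $p\not\equiv 0,1$ one writes the factor as $\big(p^{\,v_p(2n+1)+1}-1\big)/(p-1)$ and asks when $p^{\,v_p(2n+1)+1}\equiv 1\pmod 5$.

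This last computation is the step I would treat most carefully, since it is the only genuine obstacle: the order of $p$ modulo $5$ is \emph{not} uniform across the residue classes $2,3,4$. It equals $4$ when $p\equiv 2,3\pmod 5$, which forces $v_p(2n+1)\equiv 3\pmod 4$, but it equals only $2$ when $p\equiv 4\pmod 5$, which forces merely that $v_p(2n+1)$ be odd. Thus the clean condition ``$v_p(2n+1)\equiv 3\pmod 4$'' is exactly right for $p\equiv 2,3\pmod 5$, whereas the class $p\equiv 4\pmod 5$ should be recorded separately with the weaker parity condition $v_p(2n+1)$ odd; the example $2n+1=19\equiv 4\pmod 5$, where $\sigma(19)=20\equiv 0\pmod 5$ already at $v_{19}=1$, confirms that this class genuinely requires the parity condition rather than ``$\equiv 3\pmod 4$''. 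Beyond flagging this orders-of-$p$ subtlety, the remaining work is the routine divisor-sum bookkeeping carried out for the modulus-$3$ analogue in Corollary \ref{sigma}.
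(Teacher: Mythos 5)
Your derivation of the congruence itself is exactly the paper's: the same reduction $1/\psi(q)^4=\psi(q)/\psi(q)^5\equiv\psi(q)/\psi(q^5)\pmod 5$ via Lemma \ref{basic}, the same use of the $5$-dissection of Lemma \ref{5disection} to see that only $q^3\psi(q^{25})/\psi(q^5)$ contributes to exponents $\equiv 3\pmod 5$, and the same identification $\psi(q^5)/\psi(q)\equiv\psi(q)^4$ together with $t_4(n)=\sigma(2n+1)$. So the first assertion is established just as in the paper.

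The genuinely valuable part of your write-up is the final order computation, and there you are right and the paper is not. The paper's proof asserts, uniformly for $p\equiv 2,3,4\pmod 5$, that $p^{v_p(2n+1)+1}\equiv 1\pmod 5$ is equivalent to $v_p(2n+1)\equiv 3\pmod 4$. That equivalence holds only when $p$ has order $4$ modulo $5$, i.e.\ for $p\equiv 2,3\pmod 5$; when $p\equiv 4\pmod 5$ the order is $2$, and the correct condition is merely that $v_p(2n+1)$ be odd. Consequently condition (2) of the theorem as stated fails in the ``only if'' direction: your example $2n+1=19$ (so $n=9$) gives $\sigma(19)=20\equiv 0\pmod 5$, hence $\mathrm{pod}_{-4}(48)\equiv 0\pmod 5$ by the first assertion, yet $v_{19}(19)=1\not\equiv 3\pmod 4$, so neither (1) nor (2) is satisfied. (The ``if'' direction survives, since $v_p\equiv 3\pmod 4$ still forces $v_p$ odd.) The correct statement must split condition (2): for $p\equiv 2,3\pmod 5$ require $v_p(2n+1)\equiv 3\pmod 4$, and for $p\equiv 4\pmod 5$ require only that $v_p(2n+1)$ be odd --- exactly parallel to the modulus-$3$ case of Corollary \ref{sigma}, where the paper itself correctly uses the parity condition for $p\equiv 2\pmod 3$. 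So your proposal does not merely reproduce the paper's argument; it repairs a genuine error in it.
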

\begin{proof}
By Lemma \ref{basic}, we have $\psi {{(q)}^{5}}\equiv \psi ({{q}^{5}})$ (mod $5$).  By Lemma \ref{5disection}, we obtain
\[\sum\limits_{n=0}^{\infty }{{{(-1)}^{n}}\mathrm{pod}_{-4}(n){{q}^{n}}}=\frac{\psi (q)}{\psi {{(q)}^{5}}}\equiv \frac{1}{\psi ({{q}^{5}})}(A({{q}^{5}})+qB({{q}^{5}})+{{q}^{3}}\psi ({{q}^{25}})) \pmod{5}.\]
 If we extract all the terms of the form ${{q}^{5n+3}}$,  then divide by $-{{q}^{3}}$ and replace ${{q}^{5}}$ by $q$,   apply $\psi {{(q)}^{5}}\equiv \psi ({{q}^{5}})$ (mod $5$) again, we have
\[\sum\limits_{n=0}^{\infty }{(-1)^{n} \mathrm{pod}_{-4}(5n+3)q^n}\equiv -\frac{\psi ({{q}^{5}})}{\psi (q)}\equiv -\psi {{(q)}^{4}}=-\sum\limits_{n=0}^{\infty }{{{t}_{4}}(n){{q}^{n}}} \pmod{5}.\]
Note that ${{t}_{4}}(n)=\sigma (2n+1)$,  comparing the coefficients of  ${{q}^{n}}$ on both sides, we deduce that
\[\mathrm{pod}_{-4}(5n+3)\equiv {{(-1)}^{n+1}}\sigma (2n+1) \pmod{5}.\]

We write the unique prime factorization of $2n+1$ as $2n+1=\prod\nolimits_{p|2n+1}{{{p}^{{{v}_{p}}(2n+1)}}}$. Then
\[\sigma (2n+1)=\prod\nolimits_{p|2n+1}{\big(1+p+\cdots +{{p}^{{{v}_{p}}(2n+1)}}\big)}.\]
Let $p$ be any prime factor of  $2n+1$. If $p=5$,  then  $1+p+\cdots +{{p}^{{{v}_{p}}(2n+1)}}\equiv 1$ (mod $5$).

 If $p\equiv 1$ (mod $5$),  then $1+p+\cdots +{{p}^{{{v}_{p}}(2n+1)}}\equiv 1+{{v}_{p}}(2n+1)$ (mod $5$).  Now $5|1+p+\cdots +{{p}^{{{v}_{p}}(2n+1)}}$ if and only if  ${{v}_{p}}(2n+1)\equiv 4$ (mod $5$).

If  $p\equiv 2,3$ or $4$ (mod $5$), then
\[1+p+\cdots +{{p}^{{{v}_{p}}(2n+1)}}=\frac{{{p}^{{{v}_{p}}(2n+1)+1}}-1}{p-1} \pmod{5}.\]
Hence $5|1+p+\cdots +{{p}^{{{v}_{p}}(2n+1)}}$ if and only if  ${{p}^{{{v}_{p}}(2n+1)+1}}\equiv 1$  (mod $5$),  which is also equivalent to ${{v}_{p}}(2n+1)\equiv 3$ (mod $4$).
\end{proof}

\bibliographystyle{amsplain}

\begin{thebibliography}{1}
\bibitem{Andrews} G.  E. Andrews, \emph{The Theory of Partitions},  Cambridge University Press, Cambridge (1984).
\bibitem{relation} P. Barrucand, S. Cooper, and M. D. Hirschhorn, Relations between squares and triangles, Discrete Math. \textbf{248} (2002), 245--247.
\bibitem{Bruce}B. C. Berndt, \emph{Number Theory in the Spirit of Ramanujan}, Am. Math. Soc., Providence (2006).
\bibitem{wyc}W. Y. C. Chen and B. L. S. Lin, Congruences for bipartitions with odd parts distinct, Ramanujan J. \textbf{25} (2) (2011), 277--293.
\bibitem{Cooper} S. Cooper, Sums of five, seven and nine squares, Ramanujan J. \textbf{6} (2002),  469--490.
\bibitem{hisc}M. D. Hirschhorn and J. A. Sellers, Arithmetic properties of partitions with odd parts distinct, Ramanujan J. \textbf{22} (2010), 273--284.
\bibitem{3square} M. D. Hirschhorn and J. A. Sellers, On representations of a number as a sum of three squares, Discrete Math. \textbf{199} (1999), 85--101.
\bibitem{lovejoy} J. Lovejoy and R. Osburn, Quadratic forms and four partition functions modulo 3, Integers  \textbf{11} (2011),  A4.
\bibitem{radu}S. Radu and J. A. Sellers, Congruence properties modulo 5 and 7 for the pod function, Int. J. Number Theory \textbf{07} (2011),  2249--2259.
\bibitem{pee}P. C. Toh, Ramanujan type identities and congruences for partition pairs, Discrete Math. \textbf{312} (2012), 1244--1250.
\bibitem{Wang}L. Wang, Arithmetic properties of partition triples with odd parts distinct, preprint in arXiv:1407.5433, to appear in Int. J. Number Theory.

\end{thebibliography}

\end{document}